\numberwithin{equation}{section}
\newtheorem{theorem}{Theorem}
\newtheorem{assumption}{Assumption}
\newtheorem{lemma}{Lemma}
\newtheorem{example}{Example}
\newtheorem{Remark}{Remark}
\begin{document}

\title[Multiscaling asymptotic of random high-order heat equations]{Multiscaling asymptotic behavior of solutions to random high-order heat equations}


\author[1,2]{\fnm{Maha Mosaad A} \sur{Alghamdi}\orcidlink{0009-0004-6663-2286}}\email{20312612@students.latrobe.edu.au; mmghamdi@iau.edu.sa}\equalcont{These authors contributed equally to this work.}

\author[3]{\fnm{Nikolai} \sur{Leonenko}\orcidlink{0000-0003-1932-4091}}\email{LeonenkoN@cardiff.ac.uk}
\equalcont{These authors contributed equally to this work.}

\author*[1]{\fnm{Andriy} \sur{Olenko}\orcidlink{0000-0002-0917-7000}}\email{a.olenko@latrobe.edu.au}\equalcont{These authors contributed equally to this work.}

\affil[1]{\orgdiv{Department of Mathematical and Physical Sciences}, \orgname{La Trobe University}, \orgaddress{\city{Melbourne}, \postcode{3086}, \state{VIC}, \country{Australia}}}

\affil[2]{\orgdiv{Department of Mathematics, College of Science and Humanities}, \orgname{Imam Abdulrahman Bin Faisal University}, \orgaddress{\city{Jubail}, \postcode{31441}, \country{Saudi Arabia}}}

\affil[3]{\orgdiv{School of Mathematics}, \orgname{Cardiff University}, \orgaddress{\street{Senghennydd Road}, \city{Cardiff}, \postcode{CF24 4YH}, \country{United Kingdom}}}


\abstract{This paper studies high-order partial differential equations with random initial conditions that have both long-memory and cyclic behavior. The cases of random initial conditions with the spectral singularities, both at zero (representing classical long-range dependence) and at non-zero frequencies (representing cyclic long-range dependence), are investigated.
  Using spectral methods and scaling techniques, it is proved that, after proper rescaling and normalization, the solutions converge to Gaussian random fields. For each type of equation, spectral representations and covariance functions of limit fields are given. For odd-order equations, we apply the kernel averaging of solutions to obtain nonexplosive and nondegenerate limits. It is shown that the different limit fields are determined by the even or odd orders of the equations and by the presence or absence of a spectral singularity at zero. Several numeric examples illustrate the obtained theoretical results.}

\keywords{High order heat equation, Airy equation,  Random partial differential equations, Spectral singularities, Multiscaling, Limit theorems}



\maketitle

\section{Introduction}\label{sec1}

High-order generalisations of the heat equation have found numerous physical applications, in particular, in modelling wave propagation in fluids, plasma, and other media; anomalous diffusion; higher-order dispersion; and systems where standard diffusion is insufficient. Higher-order terms allow for an adequate description of these physical systems, see \cite{Peletier, Gorska, kozachenko2018estimates}, the references therein, and the specific references provided below.

Partial differential equations with random initial conditions have been the focus of extensive research in biosciences, engineering, physics, and mathematics. Early studies by De Fériet \cite{de1956random} and Rosenblatt \cite{rosenblatt1968remarks} introduced rigorous probabilistic techniques to analyze the heat equation when its initial conditions were given as a stationary random process. These results established representations of the solutions as stochastic integrals. Later contributions extended this framework to cases with more general types of random initial conditions, including non-homogeneous and random potential scenarios, see \cite{becus1980variational, uboe1995stability}.

Further research has considered diffusion equations driven by random processes and fields with a singular spectrum at the origin. This includes publications on scaling of linear equations with singular initial conditions, see \cite{Anh1999NonGaussianSF, Leonenko1998ScalingLO}. The scaling laws for the linear KdV and Airy equations with weakly dependent Gaussian random inputs were studied in \cite{beghin2000}. Related methods were applied to the renormalization and homogenization of fractional diffusion equations, as discussed in~\cite{ANH2000239, Anh2002RenormalizationAH}. See also \cite{10.1214/EJP.v16-896} and the references therein. These results generalized the classical limit theorems of Dobrushin, Major \cite{dobrushin1979non} and Taqqu~\cite{taqqu1979convergence}.

Similar problems were also studied for stochastic differential equations defined on spheres. Several classes of hyperbolic and fractional diffusion equations were investigated, motivated by cosmological applications to cosmic microwave background radiation. The publications~\cite{broadbridge2019random, BKLO1, anh2021fractional} analyzed a hyperbolic diffusion model on the unit sphere, deriving exact series solutions of the Cauchy problem with random initial conditions. Another recent publication \cite{Broadbridge2024} examined the Cauchy problem for random diffusion within an expanding space–time framework. In addition to analyzing various probabilistic properties of the solutions, it characterized the extremal behavior of the solutions by establishing upper bounds on the probabilities of large deviations.

Kozachenko et al. \cite{kozachenko2018estimates} investigated high-order heat-type equations for the case of  $\varphi$-sub-Gaussian random initial conditions. They derived upper bounds for the distributions of the suprema of solutions. Then, in the paper~\cite{kozachenko_orsingher_sakhno_vasylyk_2019}, they generalized these results to high-order dispersive partial differential equations and obtained explicit estimates of the suprema and their convergence conditions.

The concept of scaling limits has been extensively applied in statistical physics to describe macroscopic behavior, see, for example, \cite{Leonenko1998ScalingLO, Giuliani2023} and references therein. The publication \cite{knopova2004limit} examined the Airy equation with strongly dependent Gaussian initial conditions and established convergence of the renormalized solution via appropriate scaling. The publication~\cite{Airy} expanded this approach under general structural assumptions regarding dependence and developed a renormalization strategy to find the limiting behavior of the solution.

The paper \cite{cinque2025general} studied non-random high-order Airy-type and heat-type equations, developing explicit solutions through power series and integral representations. It established a probabilistic interpretation of the analytical results by linking them to pseudo-processes. The article
\cite{marchione2024stable} investigated pseudo-processes governed by odd-order equations involving the Riesz–Feller operator. They introduced extended Airy functions via power series expansions and proved that they represent pseudo-densities of asymmetric stable processes via fractional time-changed subordinators.

All the mentioned results considered the cases of short-range dependent (no spectral singularities) or long-range dependent (a spectral singularity at the origin) initial conditions. However, various real-world time series exhibit periodic or seasonal behavior and long-range dependence. Modeling and studying such scenarios has been an area of active research in recent decades. Numerous early studies have identified simultaneous cyclical behavior and strong dependence in various data, see, for example,~ \cite{porter1990application,ray1993long}, leading to the development of several cyclic long-memory models. Arteche and Robinson \cite{arteche1999seasonal, arteche2000semiparametric} contributed significantly to the theory and estimation of these processes by developing inference techniques. Gegenbauer autoregressive moving average models and their extensions (e.g. SARFIMA and ARFISMA) are the most popular approaches that provide an effective analytical framework for analyzing data with cyclic and long-memory effects, see
\cite{chung1996generalized,gray1989generalized, alomari2020estimation, Olenko2022}.

Recently, for the classic heat equation, the aforementioned results were expanded to the cases where the initial condition is a random process that exhibits cyclic long-range dependence, see~\cite{alghamdi2024}. It is characterized by a spectral singularity at a non-zero frequency, leading to asymptotic behaviors that differ from the classical long-memory cases, see \cite{Ivanov2013, Olenko2013}.

Further important extensions include studying data with multiple cycles (several spectral singularities) and long-memory effects, which require further development of the available models and new limit theorems and statistical methods. The paper generalizes known results in two directions.
First, it considers the cases of multiple spectral singularities and proves that the limits are different depending on the presence or absence of a spectral singularity at the origin. Also, the normalizing factors are different in these cases.
If the spectrum has a singularity at the origin, it determines the asymptotic behavior dominating all other singularities.
Secondly, it is shown that the asymptotic behavior and limit theorems are different for odd- and even-order heat equations.
For the even-order case, the limit fields are stationary in space but non-stationary in time.
For the odd-order case, analogous limit results do not exist in principle. Therefore, for this case, it is proposed to study kernel–smoothed solutions. Then, the obtained limits are stationary both in space and~time.

  The article has the following structure. Section \ref{sec2} presents the main definitions, assumptions, and two classical equations that motivated this research. Section~\ref{sec heat} proves the main results for even-order heat equations under cyclic long-range dependent scenarios. Section~\ref{secodd} provides the discussion of difficulties for the case of odd-order equations. Then, it proves the corresponding limit theorems.  The paper also presents numerical examples that illustrate the obtained results. Some future research directions are discussed in Section \ref{sec conclusion}.

\section{Preliminaries}
\label{sec2}

The classical heat and Airy equations with random initial conditions have attracted the attention of researchers in statistical physics, stochastic processes, stochastic partial differential equations, and their applications. This section introduces the main notations and presents these equations and a recent result on their asymptotic behavior, which motivated generalizations in the following sections.

Let $\xi(x)=\xi(x,\omega),$ $ x\in \mathbb R,$ be a real-valued, measurable, mean-square continuous stationary Gaussian random process, defined on a complete probability space $(\Omega, \mathscr{F},\mathbb{P}),$ with $\mathbb E\xi(x)=0$ and $\mathbb E\xi^2(x)=1 $.

The absolutely continuous spectral measure $F_{\xi}(\cdot)$  can be represented as
\[F_{\xi}(\Delta)=\int_{\Delta} f_{\xi}(\lambda)d\lambda,\quad \Delta\in \mathscr{B}(\mathbb R). \]
The function $f_{\xi}(\cdot)$ is integrable over $\mathbb R$ and called the spectral
density function of the random process $\xi(\cdot)$.
Then, the spectral representation of the covariance function
 can be written as
\begin{equation}\label{cov}
    B_\xi(x) = \int_{\mathbb R}e^{i\lambda x}f_{\xi}(\lambda)d\lambda.
\end{equation}

 The random process $\xi(x)$ has the following isonormal spectral representation
\begin{equation*}\label{spec_xi}\xi(x)=\int_{\mathbb R}e^{i \lambda x } Z(d\lambda)=\int_{\mathbb R}e^{i \lambda x } \sqrt{f_{\xi}(\lambda)}W(d\lambda),
\end{equation*}
 where $Z(\cdot)$ and $W(\cdot)$ denote the random measure with the control measure~$F(\cdot)$ and the white-noise random measure on $\mathbb{R}$ respectively. The integrals are interpreted in $L_2(\Omega)$ sense. For further details, see  \cite{Statisticalanalysis}.

Let us consider the classical heat equation
\begin{equation}\label{classical heat equation}
   \frac{\partial u(t,x)}{\partial t}=\mu\frac{\partial^2 u(t,x)}{\partial x^2},\quad t>0,\quad \mu>0, \quad x\in \mathbb R,
\end{equation}
subject to the random initial condition
\begin{equation}\label{initial Airy}
u(0, x)=\eta(x) =M(x)+\xi(x)=M(x)+
\int_{\mathbb{R}} e^{i\lambda x} \, Z(d\lambda),
\end{equation}
 where $M(\cdot)$ is a deterministic function.

If $M(x)\equiv 0,$ the solution field $u(t,x)$ has a zero mean, the spectral representation
\begin{equation*}\label{convolution}
    u(t,x)=\int_{\mathbb R}e^{i\lambda x-\mu \lambda^{2}t}Z(d\lambda),
\end{equation*}
 and its covariance function equals to
\[
    {\rm Cov}(u(t,x),u(t',x'))=\int_{\mathbb R}e^{i \lambda (x-x')-\mu\lambda^{2}(t+t')}F(d\lambda).
\]
\begin{assumption}\label{AssA}  The covariance function {\rm(\ref{cov})} has the form
\begin{equation*}\label{covariance of our problem1}
    B_{\kappa,w}(x)=\frac{\cos(wx)}{(1+x^{2})^{\kappa/2}},\quad x\in\mathbb R,\quad w\neq 0,\quad 0<\kappa<1.
\end{equation*}
\end{assumption}
Under this assumption on random initial conditions, the next recent result established the multiscaling asymptotic behavior of the equation solutions.
\begin{theorem}{\label{the3}} {\rm\cite{alghamdi2024}}  Consider the random field $u(t,x),$ $t>0,$ $x\in \mathbb  R,$ that is the solutions of the initial value problem {\rm (\ref{classical heat equation})} and {\rm(\ref{initial Airy})} with $M(x)\equiv 0.$  If Assumption~\ref{AssA} holds true, then,  for $\varepsilon \to 0,$  the finite-dimensional distributions of the random fields
    \begin{equation*}
        U_{\varepsilon}(t,x)=\varepsilon^{-1/4}\,u\left(\frac{t}{\varepsilon}, \frac{x}{\sqrt{\varepsilon}}\right),
    \end{equation*}
    converge weakly to the finite-dimensional distributions of the zero-mean Gaussian random field
    \begin{equation*}\label{*1}
       U_{0}(t,x)=\sqrt{\frac{c_2(\kappa)}{w^{1-\kappa}}(1-\theta_\kappa(|w|))}\int_{\mathbb R}e^{i\lambda x-\mu t\lambda^{2}}W(d\lambda),
   \end{equation*}
 with the covariance function
 \begin{align*}\label{cov3}
     {\rm Cov}( U_{0}&(t,x), U_{0}(t',x'))=\frac{c_{2}(\kappa)\sqrt{\pi}(1-\theta_\kappa(|w|))}{\sqrt{\mu}w^{1-\kappa}}\cdot\frac{e^{-\frac{(x-x')^2}{4\mu(t+t')}}}{\sqrt{t+t'}},
 \end{align*}
 where $\theta_\kappa(|w|):=1-\frac{c_1(\kappa)}{c_2(\kappa)}K_{\frac{\kappa-1}{2}}\left(|w| \right)|w|^{\frac{1-\kappa}{2}},$  $c_1(\kappa):={2^{\frac{1-\kappa}{2}}}/\left({\sqrt{\pi}{\Gamma\left(\kappa/2 \right)}}\right)$, and $c_{2}(\kappa):=\left(2\Gamma(\kappa)\cos\left(\kappa\pi/2\right)\right)^{-1}.$
\end{theorem}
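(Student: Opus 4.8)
The plan is to use that the solution $u(t,x)=\int_{\mathbb R}e^{i\lambda x-\mu\lambda^2 t}Z(d\lambda)$, and hence each rescaled field $U_\varepsilon(t,x)=\varepsilon^{-1/4}u(t/\varepsilon,x/\sqrt\varepsilon)$, is a zero-mean Gaussian field expressed as a single stochastic integral against $Z$. For Gaussian vectors with zero mean, weak convergence of finite-dimensional distributions is equivalent to entrywise convergence of the covariance matrices. Hence it suffices to prove, for all $t,t'>0$ and $x,x'\in\mathbb R$, that
\[
\mathrm{Cov}\bigl(U_\varepsilon(t,x),U_\varepsilon(t',x')\bigr)\longrightarrow \mathrm{Cov}\bigl(U_0(t,x),U_0(t',x')\bigr),\qquad \varepsilon\to0,
\]
and then to identify the right-hand side with the covariance of the stated field $U_0$, which is a routine Gaussian-integral computation.

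First I would record the spectral density of $\xi$. Writing $g_\kappa$ for the spectral density of the stationary process with covariance $(1+x^2)^{-\kappa/2}$, namely $g_\kappa(\lambda)=c_1(\kappa)|\lambda|^{(\kappa-1)/2}K_{(\kappa-1)/2}(|\lambda|)$, the factor $\cos(wx)$ in Assumption~\ref{AssA} shifts the spectrum, so that $f_\xi(\lambda)=\tfrac12\bigl(g_\kappa(\lambda-w)+g_\kappa(\lambda+w)\bigr)$. Because $w\neq0$, the singularities of $f_\xi$ sit at $\pm w$, away from the origin; $f_\xi$ is continuous and positive near $0$, and $f_\xi(0)=g_\kappa(w)$. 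Substituting the definition of $\theta_\kappa$ gives the elementary identity $f_\xi(0)=c_2(\kappa)(1-\theta_\kappa(|w|))/w^{1-\kappa}$, i.e.\ exactly the constant appearing in the limit field.

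Next I would compute, using $\mathrm{Cov}(u(t,x),u(t',x'))=\int_{\mathbb R}e^{i\lambda(x-x')-\mu\lambda^2(t+t')}f_\xi(\lambda)\,d\lambda$,
\[
\mathrm{Cov}\bigl(U_\varepsilon(t,x),U_\varepsilon(t',x')\bigr)=\varepsilon^{-1/2}\int_{\mathbb R}e^{i\lambda(x-x')/\sqrt\varepsilon}\,e^{-\mu\lambda^2(t+t')/\varepsilon}\,f_\xi(\lambda)\,d\lambda,
\]
and then substitute $\lambda=\sqrt\varepsilon\,z$, the Jacobian $\sqrt\varepsilon$ cancelling the normalisation $\varepsilon^{-1/2}$, to obtain
\[
\mathrm{Cov}\bigl(U_\varepsilon(t,x),U_\varepsilon(t',x')\bigr)=\int_{\mathbb R}e^{iz(x-x')}\,e^{-\mu z^2(t+t')}\,f_\xi(\sqrt\varepsilon\,z)\,dz .
\]
For each fixed $z$, $\sqrt\varepsilon\,z\to0$ and $f_\xi(\sqrt\varepsilon z)\to f_\xi(0)$ by continuity of $f_\xi$ at the origin. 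Passing to the limit and evaluating the Gaussian integral $\int_{\mathbb R}e^{izy-\mu z^2 s}\,dz=\sqrt{\pi/(\mu s)}\,e^{-y^2/(4\mu s)}$ (here $s=t+t'>0$) yields
\[
f_\xi(0)\,\sqrt{\frac{\pi}{\mu(t+t')}}\,e^{-\frac{(x-x')^2}{4\mu(t+t')}}=\frac{c_2(\kappa)\sqrt\pi\,(1-\theta_\kappa(|w|))}{\sqrt\mu\,w^{1-\kappa}}\cdot\frac{e^{-\frac{(x-x')^2}{4\mu(t+t')}}}{\sqrt{t+t'}},
\]
which coincides with $\mathrm{Cov}(U_0(t,x),U_0(t',x'))$; a direct computation of the covariance of the stated Wiener integral $U_0(t,x)$ confirms the identification of the limit field.

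The step requiring care is the passage to the limit under the integral sign, since $f_\xi$ has (integrable) singularities at $\pm w$ and is therefore not bounded, so $|f_\xi(\sqrt\varepsilon z)|$ is not dominated uniformly in $\varepsilon$ by a single integrable function. I would split $\mathbb R=\{|z|\le\varepsilon^{-1/4}\}\cup\{|z|>\varepsilon^{-1/4}\}$. On the first set, $|\sqrt\varepsilon z|\le\varepsilon^{1/4}\to0$, so for $\varepsilon$ small $f_\xi(\sqrt\varepsilon z)\le 2f_\xi(0)$ and dominated convergence applies with dominating function $2f_\xi(0)e^{-\mu z^2(t+t')}\in L^1(\mathbb R)$. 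On the second set, undoing the substitution and using $\lambda^2/\varepsilon\ge\varepsilon^{-1/2}$ for $|\lambda|>\varepsilon^{1/4}$ bounds the contribution by $\varepsilon^{-1/2}e^{-\mu(t+t')\varepsilon^{-1/2}}\|f_\xi\|_{L^1(\mathbb R)}\to0$, where $\|f_\xi\|_{L^1(\mathbb R)}=B_\xi(0)=1$. Combining the two pieces gives the claimed covariance convergence, and the general finite-dimensional statement then follows from the Gaussianity noted at the outset.
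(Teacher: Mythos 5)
Your argument is correct, but it takes a different route from the one used in this paper (and in the cited source for Theorem~\ref{the3}). You reduce everything to convergence of covariances: since each $U_{\varepsilon}$ is a zero-mean Gaussian field, entrywise convergence of covariance matrices gives convergence of finite-dimensional distributions, and your computation — spectral density $f_{\xi}(\lambda)=\tfrac12\bigl(g_{\kappa}(\lambda-w)+g_{\kappa}(\lambda+w)\bigr)$, the identity $f_{\xi}(0)=c_{2}(\kappa)(1-\theta_{\kappa}(|w|))/w^{1-\kappa}$ via the definition of $\theta_{\kappa}$, the substitution $\lambda=\sqrt{\varepsilon}\,z$, the split at $|z|=\varepsilon^{-1/4}$ with local boundedness of $f_{\xi}$ near the origin on one piece and the crude bound $\varepsilon^{-1/2}e^{-\mu(t+t')\varepsilon^{-1/2}}\|f_{\xi}\|_{L^{1}}\to 0$ on the other, and the Gaussian integral — is sound and matches the stated constant and covariance. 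The paper's method (seen in the proofs of Theorems~\ref{the31} and~\ref{the32}, which generalize this statement) is instead an $L_{2}(\Omega)$ argument: after the same rescaling and the scaling property of the white noise measure, $U_{\varepsilon}$ and the limit field $U_{0}$ are represented as stochastic integrals against the same $W$, and one shows $\mathbb{E}|U_{\varepsilon}(t,x)-U_{0}(t,x)|^{2}\to 0$ by the generalized dominated convergence theorem. Your approach is shorter and more elementary, but it leans entirely on joint Gaussianity, so covariance convergence already settles the finite-dimensional distributions; the paper's approach yields the stronger mean-square convergence of the coupled spectral representations, which is the mechanism reused verbatim for the higher-order and multi-singularity cases (and would survive settings where Gaussianity of the limit cannot be assumed a priori). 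Both deliver the theorem as stated.
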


The first research direction of the paper is to extend the results of Theorem~\ref{the3} to higher-order heat equations, where the second-order derivative in (\ref{classical heat equation}) is replaced by an arbitrary even-order derivative. A generalized version of Assumption~\ref{AssA} is introduced, in which the covariance function is a sum of several terms of the same form as in Assumption~\ref{AssA}. It also includes scenarios where $\omega=0$ in one of these terms. Whereas Assumption~\ref{AssA} corresponds to a single non-zero spectral singularity, the more general case of the random conditions with multiple arbitrary spectral singularities will be addressed.

The second research direction concerns odd-order equations. In particular, it generalizes the third-order case of the classical Airy equation
    \begin{equation}\label{Airy equation}
\frac{\partial u}{\partial t} = -\frac{\partial^3 u}{\partial x^3}, \quad t > 0,\ x \in \mathbb{R},
\end{equation}
subject to the random initial condition
(\ref{initial Airy}).
The corresponding solution random field
\begin{align}\label{Random Airy}
u(t, x) &= \int_{\mathbb{R}} \eta(x - y) G(t, y) \, dy= \frac{1}{\sqrt{\pi}} \int_{\mathbb{R}} M(x - y) \frac{1}{\sqrt[3]{3t}} \operatorname{Ai_{3}}\left( \frac{y}{\sqrt[3]{3t}} \right) dy \nonumber \\
& \quad + \int_{\mathbb{R}} \exp\{i \lambda x + i \lambda^3 t\} dZ(\lambda),
\end{align}
is known as the Airy random field \cite{beghin2000},
where $G(t,x)$ is the fundamental solution to the Airy equation, given~by
\begin{equation}\label{Green Airy}
G(t, x) = \frac{1}{2\pi} \int_{\mathbb{R}} e^{-i\alpha x - i\alpha^3 t} \, d\alpha = \frac{1}{\sqrt[3]{3t}}\,\mathrm{Ai_{3}}\left(\frac{x}{\sqrt[3]{3t}}\right), \quad t > 0,
\end{equation}
and $\mathrm{Ai_{3}}(x)$ denotes the Airy function of the first kind, defined as \begin{equation*}
\mathrm{Ai_{3}}(x) = \frac{1}{\pi} \int_0^\infty \cos\left(\alpha x + \frac{\alpha^3}{3} \right) \, d\alpha=\frac{1}{\pi} \sqrt{\frac{x}{3}} K_{1/3}\left( \frac{2}{3} x^{3/2} \right),
\end{equation*}
where $K_{1/3}(\cdot)$ is the modified Bessel function of the second kind of order~$1/3$.

The Airy function in (\ref{Green Airy}) has the following asymptotic behavior \[
\mathrm{Ai_3}(x) \sim
\begin{cases}
\displaystyle \frac{1}{2\sqrt{\pi}x^{1/4}} \exp\left(-\frac{2}{3}x^{3/2}\right), & x \to +\infty, \\[10pt]
\displaystyle \frac{1}{\sqrt{\pi}|x|^{1/4}} \cos\left( \frac{2}{3}|x|^{3/2} - \frac{\pi}{4} \right), & x \to -\infty.
\end{cases}
\]
Therefore, the fundamental solution $G(t,x)$ is exponentially small for large positive $x$ and oscillatory decaying for large negative $x.$
Some further details and properties of Airy functions can be found in~\cite{vallee2004,watson1944,NIST:DLMF} and the references therein.

The expected value of the field $u(t,x)$ in (\ref{Random Airy}) is
\begin{align*}
\mathbb{E}u(t, x) = \frac{1}{\sqrt{\pi}\sqrt[3]{3t}} \int_{\mathbb{R}} M(y)\operatorname{Ai_3}\left( \frac{x - y}{\sqrt[3]{3t}} \right) dy,
\end{align*}
and its covariance function is
\begin{align*}\label{covu}
{\rm{Cov}}(u(t, x), u(t', x')) &= \int_{\mathbb{R}} \cos(\lambda (x - x') + \lambda^3 (t - t')) dF(\lambda).
\end{align*}

Despite an apparent similarity to even-order equations, it will be demonstrated that the analogous limits to Theorem~\ref{the3} do not exist in the odd-order case. The paper proposes an approach based on kernel-smoothed solutions, which results in new limit random fields.

 To formulate some of the following results, we will use the Fox-Wright generalized hypergeometric function ${{}_{p}\Psi_{q}}$  with $p$ numerator and $q$ denominator parameters, see~\cite{pogany2010characteristic}, which is defined as
\begin{equation}\label{psi11}{}_{p}\Psi_{q}\!\left[
\begin{array}{c}
(\alpha_{1},A_{1}),\ldots,(\alpha_{p},A_{p}) \\
(\beta_{1},B_{1}),\ldots,(\beta_{q},B_{q})
\end{array}
; z \right]
= \sum_{n=0}^{\infty}
\frac{ \prod_{j=1}^{p} \Gamma(\alpha_{j} + A_{j} n) }
     { \prod_{k=1}^{q} \Gamma(\beta_{k} + B_{k} n) }
\frac{z^{n}}{n!},
\end{equation}
 where all $A_{j}, \, B_{k}>0$ and $1+\sum_{k=1}^{q}B_{k}-\sum_{j=1}^{p}A_{j}>0$.
\section{The case of even-order equations}\label{sec heat}
This section investigates the even-order heat equations
\begin{equation}\label{classical heat equation1}
   \frac{\partial u(t,x)}{\partial t}=(-1)^{\frac{m}{2}+1}\frac{\partial^{m} u(t,x)}{\partial x^{m}},\quad t>0,\quad x\in \mathbb R,\quad m = 2k,\, k\in \mathbb{N},
\end{equation}
subject to random initial
condition \eqref{initial Airy}.

The random process $\eta (x)=\eta(x,\omega),$ $\omega\in\Omega, x\in \mathbb R,$ is measurable mean-square continuous, with the mean $\mathbb{E}\eta(x)=M(x).$ The process $\xi(x)=\eta(x)-M(x)$ is zero-mean weakly stationary, and $\mathbb{E}|Z(d\lambda)|^{2}=F(d\lambda)$ where $F$ is the spectral measure on the measurable space $(\mathbb{R}, \mathscr{B}(\mathbb{R)})$.

If $F(\cdot)$ is absolutely continuous, then the random process $\eta(x)$ has the spectral representation
\[\eta(x)=M(x)+\int_{\mathbb R} e^{i\lambda x}\sqrt{f(\lambda)}W(d\lambda),\]
where $W(\cdot)$ is the complex white noise Gaussian random measure and $f(\cdot)$ is the spectral density.

By the Bochner-Khinchin theorem, the covariance function of the process $\eta$  has the spectral representation
\begin{equation}\label{covariance function of heat equation1}
    r(x) = {\rm Cov}(\eta(y), \eta(y+x))= {\rm Cov}(\xi(y),\xi(y+x))=\int_{\mathbb R}e^{i\lambda x}F(d\lambda), \quad x\in \mathbb{R}.
\end{equation}

The fundamental solution of the deterministic equation (\ref{classical heat equation1}) can be represented in the following form, see  \cite{orsingher2012probabilistic},
\begin{equation}\label{convolution1}
    u_{m}(t,x)=\frac{1}{2\pi}\int_{\mathbb R}e^{-i\alpha x-\alpha^{m}t}d\alpha=\frac{1}{\sqrt[m]{mt}}g_m\left(\frac{x}{\sqrt[m]{mt}}\right),
\end{equation}
where $g_m(x)=\frac{1}{\pi}\int_{0}^{\infty}\cos(x\alpha)e^{-\frac{\alpha^{m}}{m}}d\alpha$ is the symmetric Lévy stable signed function of order $m$, see \cite{garoni2002mr1893694}.

By the linearity of equation (\ref{classical heat equation1}) the solution to the initial value problem (\ref{classical heat equation1}) and (\ref{initial Airy}) can be written as
\begin{align}\label{solution1}
\hspace*{-0.2cm}u(t,x) &= \int_{\mathbb{R}} \eta(x - y) u_m(t, y) \, dy= \int_{\mathbb{R}}\left(M(x-y)+\int_{\mathbb{R}}e^{i\lambda(x-y)}dZ(\lambda)\right)u_{m}(t,y)dy\nonumber\\&=\int_{\mathbb{R}}M(x-y)u_{m}(t,y)dy+\int_{\mathbb{R}}\int_{\mathbb{R}}e^{i\lambda (x-y)}\sqrt{f(\lambda)}u_{m}(t,y)dW(\lambda)dy\nonumber\\&=\int_{\mathbb{R}}\frac{M(x-y)}{\sqrt[m]{mt}}g_m\left(\frac{y}{\sqrt[m]{mt}} \right)dy+\int_{\mathbb{R}}e^{i\lambda x}\int_{\mathbb{R}}\frac{e^{-i\lambda y}}{\sqrt[m]{mt}}g_m\left(\frac{y}{\sqrt[m]{mt}} \right)dy\, dZ(\lambda).
\end{align}
The order of integration in the above transformations can be changed as $g_m(y)\sqrt{f(\lambda)}\in L_{2}(\mathbb{R}^2).$ The application of the inverse Fourier transform in the internal integral in  \eqref{solution1} and the identity  \eqref{convolution1} give
\begin{equation}\label{*}
    u(t,x)=\int_{\mathbb{R}} \frac{M(x - y)}{\sqrt[m]{mt}} \, g_m\left( \frac{y}{\sqrt[m]{mt}} \right)dy+ \int_{\mathbb{R}}e^{i\lambda x-\lambda^{m}t}dZ(\lambda).
\end{equation}

It follows from (\ref{solution1}) and (\ref{*}) that
\begin{align*}
 \mathbb{E}u(t,x)= \int_{\mathbb{R}} \frac{M(y)}{\sqrt[m]{mt}} \, g_m\left( \frac{x-y}{\sqrt[m]{mt}} \right)dy,
\end{align*}
and the covariance function of $u(t,x)$ equals
\begin{align}\label{covariance f1}
   {\rm Cov}(u(t,x),u(t',x'))& =\int_{\mathbb R}e^{i \lambda (x-x')-\lambda^{m}(t+t')}F(d\lambda)\nonumber\\
   &  = \int_{\mathbb R}\cos(\lambda (x-x'))e^{-\lambda^{m}(t+t')}F(d\lambda).
\end{align}

Formula (\ref{covariance f1}) shows that the random field $u(t,x)$ is stationary in space but not stationary in time.

We generalize Assumption~\ref{AssA} as follows.
\begin{assumption}\label{AssB}
\label{D'''} The covariance function {\rm (\ref{covariance function of heat equation1})} is of the form
\begin{align}\label{covariance of our problem11}
    &r(x)=\sum_{j=0}^{n}\frac{\cos(w_{j}x)}{(1+x^{2})^{\kappa_j/2}}A_{j},\quad x\in\mathbb R,
\end{align}
where $\sum_{j=0}^{n}A_{j}=1, \, w_{0}=0,$ $ w_{j}\neq 0,\,\kappa_j\in(0,1),\, j=0,\dots,n.    $
\end{assumption}

The covariance function in \eqref{covariance of our problem11} is non-integrable and has an oscillating behavior, which corresponds to the cyclic long-range dependence scenario.

It follows from (\ref{covariance of our problem11}), that the corresponding spectral density has the representation
\begin{align}\label{spectral01}
  f(\lambda):=\sum_{j=0}^{n}\frac{c_1(\kappa_j)}{2}A_{j} \left(\frac{K_{\frac{\kappa_j-1}{2}}\left(|\lambda+w_j| \right)}{|\lambda+w_j|^{\frac{1-\kappa_j}{2}}}+ \frac{K_{\frac{\kappa_j-1}{2}}\left(|\lambda-w_j| \right)}{|\lambda-w_j|^{\frac{1-\kappa_j}{2}}}\right),
\end{align}
where $c_1(\kappa_j):={2^{\frac{1-\kappa_j}{2}}}/\left({\sqrt{\pi}{\Gamma\left({\kappa_j}/{2} \right)}}\right),\, j=0,\dots,n$, and $K_\nu(\cdot)$ is the modified Bessel function of the second kind  \[K_\nu(z)=\frac{1}{2}\int_{0}^{\infty}s^{\nu-1}\exp\left(-\frac{1}{2}\left(s+\frac{1}{2} \right)z\right)ds,\quad z\geq 0,\quad \nu\in \mathbb {R}.\]
Also note that $K_{-\nu}(z)=K_{\nu}(z)$, see \cite{NIST:DLMF}.

The spectral density $f(\lambda)$ has singularities at the points $w_j,$ $j=0,...,n.$

The spectral density $f(\cdot)$ is an even function. We select the Gaussian random measure $W(\cdot)$ to be symmetric around the origin to make all random processes and fields considered hereafter real-valued.

\begin{example}\label{ex1}
Figure~{\rm{\ref{fig1}}} shows the covariance function {\rm(\ref{covariance of our problem11})} and the spectral density~{\rm(\ref{spectral01})} for the case of $n=3,$ $A_j=0.20,\,0.20,\, 0.35,\, 0.25,$ $j=0,1,2,3$, with the corresponding $\kappa_{j}$ and $w_{j}$ taking the values $0.2,\,0.4,\, 0.6,\, 0.8$ and $0,\, 0.8,\,  1.2,\,2.0$, respectively.
\begin{figure}[htbp]
\centering
\begin{subfigure}{0.47\textwidth}
    \centering
    \includegraphics[width=\linewidth]{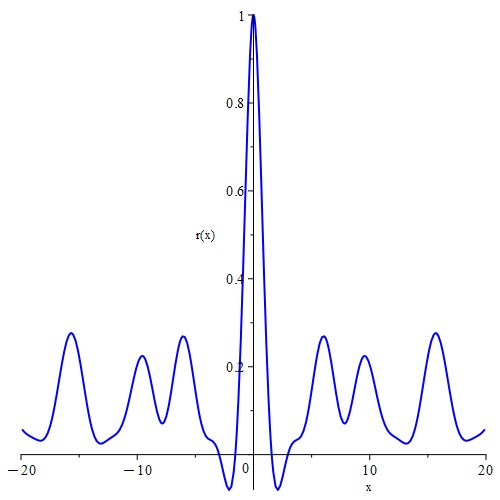} 
    \caption{Covariance function}
\end{subfigure}\hfill
\begin{subfigure}{0.47\textwidth}
    \centering
    \includegraphics[width=\linewidth]{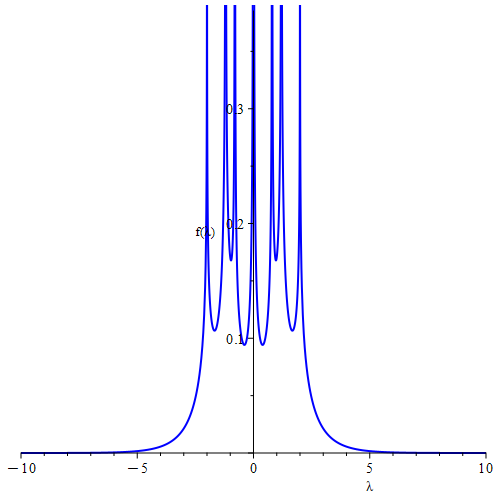} 
    \caption{Spectral density}
\end{subfigure}
\caption{Covariance and spectral structure of the initial condition $\eta(x)$}
\label{fig1}
\end{figure}
\end{example}
The following lemma combines results on the spectral density and its asymptotic behavior, established in \cite{Anh2002RenormalizationAH, alghamdi2024, anh2001}.
\begin{lemma}\label{lem11}
The summands in the spectral density $f(\cdot)$ have the following form
\begin{align}\label{lem}
    &f_{\kappa,w}(\lambda) =  \frac{c_2(\kappa)}{2}A\left(\frac{1-\theta_{\kappa}(|\lambda+w|)}{|\lambda+w|^{1-\kappa}}+\frac{1-\theta_{\kappa}(|\lambda-w|)}{|\lambda-w|^{1-\kappa}}\right),
\end{align}\label{spectral1}
where $c_{2}(\kappa):=\left(2\Gamma(\kappa)\cos\left({\kappa\pi}/{2}\right)\right)^{-1}$ and $\theta_{\kappa}(\cdot)$ does not depend on $w$.

For any $\kappa \in (0,1)$, the function $\theta_\kappa(u)$ is bounded and satisfies $|\theta_{\kappa}(u)| \leq 1$ for all $u \in [0, \infty)$.  Moreover, the following asymptotic expansion near the origin holds
\begin{align*}
    &\theta_{\kappa}(|u|) = \frac{\Gamma\left(\frac{\kappa + 1}{2} \right)}{2^{1 - \kappa} \, \Gamma\left(\frac{3 - \kappa}{2} \right)} \left| u \right|^{1 - \kappa}
- \frac{|u|^2}{2(\kappa + 1)}  + o(|u|^2),\quad  |u| \to 0.
\end{align*}
When $|\lambda| \to \infty$, the following asymptotic behavior holds
\begin{align*}
   f_{\kappa,w}(\lambda)\sim  \frac{\sqrt{\pi}c_{1}(\kappa) (e^{w}+e^{-w})}{2^{3/2}|\lambda|^{1-\kappa/2}}
e^{-|\lambda|}.
\end{align*}\label{condition21}
\end{lemma}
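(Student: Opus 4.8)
The statement collects several facts about a single summand, so the plan is to fix one pair $(\kappa,w)$ with $\kappa\in(0,1)$, write $\nu:=(1-\kappa)/2\in(0,\tfrac12)$, and treat $f_{\kappa,w}$ in isolation. First I would recall the classical Fourier-transform identity for the Bessel-potential (Mat\'ern-type) kernel, namely that the spectral density of $(1+x^2)^{-\kappa/2}$ equals $c_1(\kappa)\,|\lambda|^{-(1-\kappa)/2}K_{(\kappa-1)/2}(|\lambda|)$, with $c_1(\kappa)=2^{(1-\kappa)/2}/(\sqrt\pi\,\Gamma(\kappa/2))$; this is the content of the cited references \cite{anh2001, Anh2002RenormalizationAH, alghamdi2024}. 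Since $\cos(wx)(1+x^2)^{-\kappa/2}=\tfrac12(e^{iwx}+e^{-iwx})(1+x^2)^{-\kappa/2}$, multiplication by $\cos(wx)$ translates the spectrum by $\pm w$, which gives exactly \eqref{spectral01}. To obtain the second form \eqref{lem} I would simply \emph{define}
\[
\theta_\kappa(|u|):=1-\frac{c_1(\kappa)}{c_2(\kappa)}\,|u|^{\nu}K_{\nu}(|u|),\qquad \nu=\tfrac{1-\kappa}{2},
\]
(using $K_{-\nu}=K_\nu$ so that $K_{(\kappa-1)/2}=K_\nu$); this function is visibly independent of $w$, and substituting it back into \eqref{lem} reproduces \eqref{spectral01} provided the constants match, i.e.\ provided $c_1(\kappa)\,2^{\nu-1}\Gamma(\nu)=c_2(\kappa)$. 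I would verify this last identity using the Legendre duplication formula $\Gamma(\kappa)=2^{\kappa-1}\pi^{-1/2}\Gamma(\kappa/2)\Gamma(\tfrac{\kappa+1}{2})$ together with the reflection formula $\Gamma(\tfrac{1-\kappa}{2})\Gamma(\tfrac{1+\kappa}{2})=\pi/\cos(\pi\kappa/2)$; it reduces to an elementary cancellation.

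Next, for the bound $|\theta_\kappa(u)|\le1$ on $[0,\infty)$ I would use the integral representation $K_\nu(z)=\tfrac12(z/2)^{-\nu}\int_0^\infty \exp(-t-z^2/(4t))\,t^{\nu-1}\,dt$, valid for $z>0$, $\nu>0$. Since $\exp(-z^2/(4t))<1$ for $z>0$, this yields $0<|u|^{\nu}K_\nu(|u|)<\tfrac12\, 2^{\nu}\Gamma(\nu)=2^{\nu-1}\Gamma(\nu)$, while $\lim_{|u|\to0}|u|^{\nu}K_\nu(|u|)=2^{\nu-1}\Gamma(\nu)$. By the constant identity of the previous step, $(c_1(\kappa)/c_2(\kappa))\,2^{\nu-1}\Gamma(\nu)=1$, so $0<1-\theta_\kappa(|u|)\le1$, i.e.\ $0\le\theta_\kappa(u)<1$, and in particular $|\theta_\kappa(u)|\le1$. (Alternatively, the same bound follows from the monotonicity $\frac{d}{dz}(z^{\nu}K_\nu(z))=-z^{\nu}K_{\nu-1}(z)<0$.)

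For the behavior near the origin I would plug the series $K_\nu(z)=\tfrac{\pi}{2\sin(\pi\nu)}\big(I_{-\nu}(z)-I_\nu(z)\big)$, with $I_{\pm\nu}(z)=\sum_{k\ge0}(z/2)^{2k\pm\nu}/(k!\,\Gamma(k\pm\nu+1))$, into the definition of $\theta_\kappa$ and collect the terms of orders $z^0$, $z^{2\nu}=z^{1-\kappa}$, and $z^2$ (everything else is of higher order, since $2\nu<1<2$). The $z^0$ coefficient is again $1$ (the same constant identity), so it cancels the leading $1$; the $z^{1-\kappa}$ coefficient simplifies, after the same duplication/reflection reductions, to $\Gamma(\tfrac{\kappa+1}{2})/(2^{1-\kappa}\Gamma(\tfrac{3-\kappa}{2}))$, and the $z^2$ coefficient to $-1/(2(\kappa+1))$, which is the stated expansion. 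Finally, for $|\lambda|\to\infty$ I would use $K_\nu(z)\sim\sqrt{\pi/(2z)}\,e^{-z}$ in \eqref{spectral01}: for $\lambda\to+\infty$ each of the two shifted terms behaves like $\sqrt{\pi/2}\,e^{-(\lambda\mp w)}/(\lambda\mp w)^{\nu+1/2}\sim\sqrt{\pi/2}\,e^{\pm w}e^{-\lambda}\lambda^{-(1-\kappa/2)}$ (since $\nu+\tfrac12=1-\kappa/2$), so their sum contributes the factor $e^{w}+e^{-w}$; evenness of $f_{\kappa,w}$ replaces $\lambda$ by $|\lambda|$ for $\lambda\to-\infty$, giving the claimed tail. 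The only real obstacle is bookkeeping: carrying the normalizing constants $c_1(\kappa),c_2(\kappa)$ through the Bessel and gamma-function identities so that the two representations in \eqref{lem} coincide and the expansion coefficients collapse to the clean closed forms stated; the boundedness of $\theta_\kappa$ is the one step that is analytic rather than purely computational, and it is handled by the elementary integral estimate above.
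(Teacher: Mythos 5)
Your proposal is correct, but it is worth noting that the paper itself offers no proof of this lemma: it simply states that the lemma ``combines results'' established in \cite{Anh2002RenormalizationAH, alghamdi2024, anh2001}, so your sketch supplies a self-contained derivation where the paper relies on citation. Your route is sound at every step I checked: the Mat\'ern/Bessel-potential transform pair gives the summand of \eqref{spectral01}, the modulation by $\cos(wx)$ produces the two shifts $\pm w$ with weight $\tfrac12$, and defining $\theta_\kappa(|u|)=1-\tfrac{c_1(\kappa)}{c_2(\kappa)}|u|^{\nu}K_\nu(|u|)$, $\nu=\tfrac{1-\kappa}{2}$, reproduces \eqref{lem} by direct substitution (this also matches the identity \eqref{note*} used later in the paper). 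The constant identity $c_1(\kappa)\,2^{\nu-1}\Gamma(\nu)=c_2(\kappa)$ does hold via duplication and reflection, as you claim; one small correction of emphasis is that it is not needed for the two representations to coincide (that is purely the definition of $\theta_\kappa$), but it is exactly what makes $\theta_\kappa(0)=0$, gives the sharp bound $0\le\theta_\kappa(u)<1$, and kills the constant term in the small-$u$ expansion. I verified that your series computation yields the coefficients $\Gamma\!\left(\tfrac{\kappa+1}{2}\right)/\bigl(2^{1-\kappa}\Gamma\!\left(\tfrac{3-\kappa}{2}\right)\bigr)$ and $-1/(2(\kappa+1))$ after the same reflection/duplication reductions, and that $K_\nu(z)\sim\sqrt{\pi/(2z)}\,e^{-z}$ gives the stated tail with exponent $\nu+\tfrac12=1-\kappa/2$ and the factor $e^{w}+e^{-w}$; note that your tail carries the amplitude $A$, consistent with \eqref{lem}, whereas the displayed asymptotic in the lemma omits $A$ --- a typographical slip in the paper rather than a flaw in your argument. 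The boundedness argument via the integral representation (or the monotonicity of $z^\nu K_\nu(z)$) is also valid. In short: no gaps; your proof fills in what the paper delegates to the literature.
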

\begin{Remark}
Lemma~{\rm\ref{lem11}} shows that the spectral density has a singularity at $0$ and singularities at the points $\pm w_j$, $j=0,...,n,$ with power-law types $|\lambda\pm w_{j}|^{\kappa_{j}-1}$. It also provides the rate at which deviations from these power-laws occur as $\lambda \to \pm w_j$.
\end{Remark}
\begin{theorem}\label{the31}
Consider a random field \( u(t,x), \ t>0, \ x\in \mathbb{R} \), defined by~{\rm(\ref{solution1})} with \( \eta(x), \ x\in \mathbb{R} \), which has a covariance function satisfying Assumption~{\rm \ref{AssB}} with~$A_0=0$. If \( \varepsilon \to 0 \), then the finite-dimensional distributions of the random~fields
    \begin{equation*}
        U_{\varepsilon}(t,x)=\varepsilon^{-1/4}\,\left(u\left(\frac{t}{\sqrt{\varepsilon^{m}}}, \frac{x}{\sqrt{\varepsilon}}\right)-Eu\left(\frac{t}{\sqrt{\varepsilon^{m}}}, \frac{x}{\sqrt{\varepsilon}}\right)\right)
    \end{equation*}
    converge weakly to the finite-dimensional distributions of the zero-mean Gaussian random field
   \begin{equation}\label{*11}
       U_{0}(t,x)=\sqrt{\sum_{j=1}^{n}c_{1}(\kappa_{j})A_{j}K_{\frac{\kappa_{j}-1}{2}}(|w_{j}|)|w_{j}|^{\frac{\kappa_{j}-1}{2}}}\int_{\mathbb R}e^{i\lambda x- \lambda^{m}t}W(d\lambda)
   \end{equation}
 with the covariance function
 \begin{align}\label{cov31}
    & {\rm Cov}( U_{0}(t,x), U_{0}(t',x')) = \sum_{j=1}^{n}c_{1}(\kappa_{j})A_{j}\frac{K_{\frac{\kappa_{j}-1}{2}}(|w_{j}|)}{|w_{j}|^{\frac{1-\kappa_{j}}{2}}}\int_{\mathbb R}\cos\left(\lambda(x-x')\right)e^{-(t+t')\lambda^{m}}d\lambda\nonumber\\&=\sum_{j=1}^{n}c_{1}(\kappa_{j})A_{j}\frac{K_{\frac{\kappa_{j}-1}{2}}(|w_{j}|)}{|w_{j}|^{\frac{1-\kappa_{j}}{2}}} \frac{2\sqrt{\pi}}{m(t+t')^{1/m}}{{}_{1}\Psi_{1}}\,
\left[
\begin{array}{@{}l@{}}
(1/m,2/m)\\
(1/2,1)
\end{array}
\,; -\frac{(x-x')^{2}}{4(t+t')^{2/m}}
\right],
 \end{align}
 where ${{}_{p}\Psi_{q}}$ is the Fox-Wright generalized hypergeometric function with $p$ numerator and $q$ denominator parameters.
\end{theorem}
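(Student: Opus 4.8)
The plan is to work entirely in the spectral domain and verify the convergence of finite-dimensional distributions via convergence of covariances, which suffices here because $U_\varepsilon$ is a linear functional of the Gaussian measure $Z$ (hence Gaussian), so the limit is determined by its covariance function. First I would write down the spectral representation of $U_\varepsilon(t,x)$ using \eqref{*}: subtracting the mean kills the $M$-term, and after the change of variables $\lambda \mapsto \lambda/\sqrt{\varepsilon}$ in the stochastic integral one gets, with $Z_\varepsilon(d\lambda)$ the rescaled random measure having control measure $\varepsilon^{-1/2}F(d\lambda/\sqrt\varepsilon)$,
\begin{equation*}
U_\varepsilon(t,x) = \int_{\mathbb R} e^{i\lambda x - \lambda^m t}\, \varepsilon^{-1/4}\,\widetilde Z_\varepsilon(d\lambda),
\end{equation*}
so that
\begin{equation*}
{\rm Cov}(U_\varepsilon(t,x),U_\varepsilon(t',x')) = \int_{\mathbb R} \cos(\lambda(x-x'))\, e^{-\lambda^m(t+t')}\, \frac{f(\lambda/\sqrt\varepsilon)}{\sqrt\varepsilon}\, d\lambda.
\end{equation*}
The core of the proof is then to show that the measures $\varepsilon^{-1/2} f(\lambda/\sqrt\varepsilon)\, d\lambda$ converge, in the sense of integration against $\cos(\lambda(x-x'))e^{-\lambda^m(t+t')}$, to $C\, d\lambda$ with $C = \sum_{j=1}^n c_1(\kappa_j) A_j K_{(\kappa_j-1)/2}(|w_j|) |w_j|^{(\kappa_j-1)/2}$.

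To identify the limiting constant, I would decompose $f$ using Lemma~\ref{lem11} into the $n$ summands $f_{\kappa_j,w_j}$ (there is no $j=0$ term since $A_0=0$), and treat each separately. For a fixed $j$, after rescaling, the two bumps of $f_{\kappa_j,w_j}$ sit near $\lambda = \pm w_j\sqrt\varepsilon \to 0$; using the representation \eqref{lem}, near such a bump $|\lambda \mp w_j|^{\kappa_j-1}$ behaves, under $\lambda\mapsto \lambda/\sqrt\varepsilon$, like $\varepsilon^{(1-\kappa_j)/2}$ times a fixed integrable singularity, which is exactly cancelled by... no — here the right bookkeeping is instead to use the original Bessel form \eqref{spectral01}: $\varepsilon^{-1/2} f_{\kappa_j,w_j}(\lambda/\sqrt\varepsilon)$, since $K_{(\kappa_j-1)/2}$ is continuous and $|\lambda/\sqrt\varepsilon \mp w_j|^{(1-\kappa_j)/2}\cdot \varepsilon^{1/2}$... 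I would in fact follow the scaling exactly as in the proof of Theorem~\ref{the3} in \cite{alghamdi2024}: the normalization $\varepsilon^{-1/4}$ together with $\varepsilon^{-1/2}$ from the measure is calibrated so that the $|\lambda\pm w_j|^{-(1-\kappa_j)}$-singularity, which is \emph{integrable} (as $\kappa_j\in(0,1)$), contributes its full mass times the value of the smooth factor at the singular point, namely $\cos(0)=1$ and $e^{0}=1$; the $\theta_{\kappa_j}$-corrections vanish in the limit by the near-origin asymptotics in Lemma~\ref{lem11}. Carefully performing this (splitting the integral into a neighborhood of $\pm w_j\sqrt\varepsilon$ and its complement, dominated convergence on the complement using the exponential tail bound from Lemma~\ref{lem11}, and a local change of variables near each singularity) yields that the $j$-th summand contributes $c_1(\kappa_j) A_j K_{(\kappa_j-1)/2}(|w_j|)|w_j|^{(\kappa_j-1)/2}\int_{\mathbb R}\cos(\lambda(x-x'))e^{-\lambda^m(t+t')}d\lambda$, matching \eqref{cov31}.

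Next I would evaluate the deterministic Fourier-type integral $\int_{\mathbb R}\cos(\lambda u)e^{-a\lambda^m}d\lambda$ with $a=t+t'>0$, $u=x-x'$, $m=2k$, and show it equals $\tfrac{2\sqrt\pi}{m a^{1/m}}\,{}_1\Psi_1\!\big[(1/m,2/m);(1/2,1);-u^2/(4a^{2/m})\big]$; this is a routine computation expanding $\cos(\lambda u)=\sum_{p\ge 0}(-1)^p (\lambda u)^{2p}/(2p)!$, integrating $\int_{\mathbb R}\lambda^{2p}e^{-a\lambda^m}d\lambda = \tfrac{2}{m}a^{-(2p+1)/m}\Gamma\!\big(\tfrac{2p+1}{m}\big)$ term by term, using the duplication formula $(2p)! = 4^p p!\,\Gamma(p+1/2)/\sqrt\pi$, and recognizing the series \eqref{psi11} (the convergence condition $1+1-2/m>0$ holds for all $m\ge 2$). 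Finally, to upgrade from convergence of covariances to weak convergence of finite-dimensional distributions, I would note that for any finite collection $(t_l,x_l)$ and real coefficients $b_l$, the linear combination $\sum_l b_l U_\varepsilon(t_l,x_l)$ is a centered Gaussian random variable whose variance is a finite sum of the covariances just analyzed, hence converges; so the joint characteristic functions converge, giving the claimed convergence. The main obstacle I anticipate is the first step: making the passage to the limit for $\varepsilon^{-1/2}f(\lambda/\sqrt\varepsilon)$ fully rigorous uniformly in the test function $\cos(\lambda(x-x'))e^{-\lambda^m(t+t')}$ — one must control the contribution of \emph{all} the singularities simultaneously, ensure the local expansions near $\pm w_j\sqrt\varepsilon$ are uniform over the relevant range of $(t,x)$, and produce a single $\varepsilon$-independent integrable dominating function (the exponential decay from Lemma~\ref{lem11} plus the local integrable power singularities) to justify dominated convergence.
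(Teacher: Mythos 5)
Your overall plan (reduce fidi convergence to covariance convergence via Gaussianity, then evaluate $\int_{\mathbb R}\cos(\lambda u)e^{-a\lambda^{m}}d\lambda$ by expanding the cosine, integrating termwise and using the duplication formula to reach the ${}_{1}\Psi_{1}$ form) is legitimate, and the last step coincides with the paper's computation. However, the central step — identifying the limit of the rescaled spectrum — is carried out with the scaling in the wrong direction, and the mechanism you describe is not the one at work. A direct computation from \eqref{*} gives
\begin{align*}
{\rm Cov}(U_\varepsilon(t,x),U_\varepsilon(t',x'))
&=\varepsilon^{-1/2}\int_{\mathbb R}e^{i\lambda(x-x')/\sqrt{\varepsilon}}\,e^{-\lambda^{m}(t+t')/\varepsilon^{m/2}}f(\lambda)\,d\lambda\\
&=\int_{\mathbb R}\cos\bigl(\mu(x-x')\bigr)\,e^{-\mu^{m}(t+t')}\,f(\mu\sqrt{\varepsilon})\,d\mu ,
\end{align*}
so the density entering the limit is $f(\mu\sqrt{\varepsilon})$, not $\varepsilon^{-1/2}f(\lambda/\sqrt{\varepsilon})$ as in your formula. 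Your family of measures $\varepsilon^{-1/2}f(\lambda/\sqrt{\varepsilon})\,d\lambda$ has total mass $\int_{\mathbb R}f(\lambda)d\lambda=r(0)=1$ and concentrates at the origin, hence converges to the unit point mass at $0$, not to $C\,d\lambda$; taken literally, your representation would give a covariance tending to the degenerate constant $1$, contradicting \eqref{cov31}. Accordingly, the picture ``the integrable singularity contributes its full mass times $\cos(0)e^{0}=1$'' is incorrect: under the correct scaling the singular points of $f$ are moved to $\pm w_j/\sqrt{\varepsilon}\to\pm\infty$, where they are suppressed by $e^{-\mu^{m}(t+t')}$, and what survives is simply the value $f(0)$, which is finite exactly because $A_0=0$ removes the singularity at the origin, and equals $\sum_{j=1}^{n}c_{1}(\kappa_{j})A_{j}K_{\frac{\kappa_{j}-1}{2}}(|w_{j}|)|w_{j}|^{\frac{\kappa_{j}-1}{2}}$ by the identity \eqref{note*}.

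This is precisely how the paper argues: it writes $U_\varepsilon\overset{d}{=}\int_{\mathbb R}e^{i\tilde\lambda x-\tilde\lambda^{m}t}\sqrt{f(\tilde\lambda\sqrt{\varepsilon})}\,W(d\tilde\lambda)$, proves $\mathbb E|U_\varepsilon-U_0|^{2}\to0$ by the generalized dominated convergence theorem, and deals with the absence of a single $\varepsilon$-independent dominating function (the difficulty you correctly anticipate) by substituting $u=\lambda\sqrt{\varepsilon}$ and splitting into $|u|\le\varepsilon^{1/2-\delta}$ and $|u|>\varepsilon^{1/2-\delta}$: on the inner region monotonicity of $(1-\theta_{\kappa_j}(|u-w_j|))/|u-w_j|^{1-\kappa_j}$ pins the integral between quantities converging to $\tfrac{f(0)}{2}\int_{\mathbb R}e^{-2u^{m}t}du$, while on the outer region the factor $e^{-2\varepsilon^{-m\delta}t}$ together with integrability of the spectral density kills the contribution. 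To repair your proposal you must replace your rescaled measure by $f(\mu\sqrt{\varepsilon})\,d\mu$, identify the constant as $f(0)$ by continuity of $f$ at the origin, and supply a domination argument of the above type (or the paper's $L^{2}$ version); the Gaussian reduction and the Fox--Wright evaluation can then be kept as you wrote them.
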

\begin{proof}
   It holds in the sense of the finite-dimensional distributions that
   \begin{align}\label{thm1}
       U_{\varepsilon}(t,x) &= \frac{1}{\varepsilon^{1/4}}\Bigg(\int_{\mathbb{R}} \frac{M(y)}{\sqrt[m]{mt}} \, g_m\left( \frac{x-y}{\sqrt[m]{mt}} \right)dy+\int_{\mathbb{R}} e^{i\lambda \frac{x}{\sqrt{\varepsilon}} - \lambda^{m}{t}/{\sqrt{\varepsilon^{m}}}}Z(d\lambda)\nonumber\\&-Eu\left(\frac{t}{\sqrt{\varepsilon^{m}}}, \frac{x}{\sqrt{\varepsilon}}\right)\Bigg)= \frac{1}{\varepsilon^{1/4}}\int_{\mathbb{R}} e^{i\lambda \frac{x}{\sqrt{\varepsilon}}-\lambda^{m}{t}/{\sqrt{\varepsilon^{m}}}}\sqrt{f(\lambda)}W(d\lambda).
   \end{align}
   Note, that by \eqref{spectral01} and \eqref{lem}, one obtains
   \begin{equation}\label{note*}
       c_{1}(\kappa_{j})K_{\frac{\kappa_{j}-1}{2}}(|\lambda|)|\lambda|^{\frac{\kappa_{j}-1}{2}}=c_2(\kappa_j)\frac{1-{\theta_{\kappa_j}(|\lambda|)}}{|\lambda|^{1-\kappa_j}}, \, \lambda\in \mathbb{R}.
   \end{equation}

By (\ref{spectral01}), the change of variables \(\lambda = \tilde{\lambda}\sqrt{\varepsilon} \) and using  the scaling property \( W(d(\tilde{\lambda}\varepsilon))\overset{d}{=} \sqrt{\varepsilon} W(d\tilde{\lambda}), \) one obtains
    \begin{align}\label{***}
   &U_{\varepsilon}(t,x) \overset{d}{=}\int_{\mathbb{R}} e^{i\tilde{\lambda}x- \tilde{\lambda}^{m}t}\left(f(\tilde{\lambda}\sqrt{\varepsilon})\right)^{1/2}W(d\tilde{\lambda})
    \overset{d}{=}\int_{\mathbb{R}} e^{i\tilde{\lambda}x- \tilde\lambda^{m}t} \nonumber \\&\sqrt{\sum_{j=1}^{n}\frac{c_2(\kappa_{j})}{2}A_{j}\left(\frac{1-{\theta_{\kappa_j}}(|\tilde{\lambda}\sqrt{\varepsilon}+w_{j}|)}{|\tilde{\lambda}\sqrt{\varepsilon}+w_{j}|^{1-\kappa_{j}}}+\frac{1-{\theta_{\kappa_j}}(|\tilde{\lambda}\sqrt{\varepsilon}-w_{j}|)}{|\tilde{\lambda}\sqrt{\varepsilon}-w_{j}|^{1-\kappa_{j}}}\right)}W(d\tilde{\lambda}).
\end{align}
It follows from (\ref{*11}) and (\ref{***}) that
    \begin{align}\label{131}
&R(t,x) = \mathbb{E}\left(U_{\varepsilon}(t,x)-U_{0}(t,x)\right)^{2} \nonumber \\
&= \mathbb {E}\Bigg(\int_{\mathbb{R}} e^{i{\lambda}x- {\lambda}^{m}t}\sqrt{\sum_{j=1}^{n}\frac{c_2(\kappa_{j})}{2}A_{j}\left(\frac{1-{\theta_{\kappa_j}}(|{\lambda}\sqrt{\varepsilon}+w_{j}|)}{|{\lambda}\sqrt{\varepsilon}+w_{j}|^{1-\kappa_{j}}}+\frac{1-{\theta_{\kappa_j}}(|{\lambda}\sqrt{\varepsilon}-w_{j}|)}{|{\lambda}\sqrt{\varepsilon}-w_{j}|^{1-\kappa_{j}}}\right)}\nonumber \\
&\times W(d{\lambda})  -\sqrt{\sum_{j=1}^{n}\frac{c_2(\kappa_{j})}{|w_{j}|^{1-\kappa_{j}}}(1-{\theta_{\kappa_j}}(|w_{j}|))A_{j}}\int_{\mathbb{R}}e^{i\lambda x- \lambda^{m}t}W(d{\lambda})\Bigg)^{2}\nonumber\\
& = \int_{\mathbb{R}} e^{-2 {\lambda}^{m}t} \left(\sqrt{Q_{\varepsilon}(\lambda)}-\sqrt{\sum_{j=1}^{n}\frac{c_2(\kappa_{j})}{|w_{j}|^{1-\kappa_{i}}}(1-{\theta_{\kappa_j}}(|w_{j}|))A_{j}}\right)^{2}d\lambda,
\end{align}
where $Q_{\varepsilon}({\lambda}):=\sum_{j=1}^{n}\frac{c_2(\kappa_{j})}{2}A_{j}\left(\frac{1-{\theta_{\kappa_j}}(|{\lambda}\sqrt{\varepsilon}+w_{j}|)}{|{\lambda}\sqrt{\varepsilon}+w_{j}|^{1-\kappa_{j}}}+\frac{1-{\theta_{\kappa_j}}(|{\lambda}\sqrt{\varepsilon}-w_{j}|)}{|{\lambda}\sqrt{\varepsilon}-w_{j}|^{1-\kappa_{j}}}\right).$

Note that \begin{align}\label{Qep}
    \lim_{\varepsilon\to 0}Q_{\varepsilon}(\lambda)&=\sum_{j=1}^{n}A_{j}c_2(\kappa_{j})\frac{1-{\theta_{\kappa_j}}(|w_{j}|)}{|w_{j}|^{1-\kappa_{j}}}=f(0).
\end{align}
Therefore, the integral in (\ref{131}) converges pointwise to $0$.

As it holds
$${\left(\sqrt{Q_{\varepsilon}(\lambda)}-\sqrt{\sum_{j=1}^{n}\frac{c_2(\kappa_{j})}{|w_{j}|^{1-\kappa_{j}}}(1-{\theta_{\kappa_j}}(|w_{j}|))A_{j}}\right)^{2} \leq Q_{\varepsilon}(\lambda)+\sum_{j=1}^{n}\frac{c_2(\kappa_{j})}{|w_{j}|^{1-\kappa_{j}}}(1-{\theta_{\kappa_j}}(|w_{j}|))A_{j}},$$
one may apply the generalized Lebesgue's dominated convergence theorem  \cite[p.59]{Folland1999}. To justify its conditions, one needs to show that the limit
\begin{equation}\label{m1}
\begin{split}
&\lim_{\varepsilon\to 0}\int_{\mathbb{R}}e^{-2\lambda^{m}t}
\left(Q_{\varepsilon}(\lambda)+\sum_{j=1}^{n}
\frac{c_{2}(\kappa_{j})(1-{\theta_{\kappa_j}}(|w_{j}|))}
{|w_{j}|^{1-\kappa_{j}}}A_{j} \right)d\lambda\\
&=\int_{\mathbb{R}}e^{-2\lambda^{m}t}
\lim_{\varepsilon\to 0}\left(Q_{\varepsilon}(\lambda)+\sum_{j=1}^{n}
\frac{c_{2}(\kappa_{j})(1-{\theta_{\kappa_j}}(|w_{j}|))}
{|w_{j}|^{1-\kappa_{j}}}A_{j} \right)d\lambda
< \infty.
\end{split}
\end{equation}

 The finiteness of the last integral in (\ref{m1}) follows from (\ref{Qep}) and the boundedness of the spectral density at zero
 \begin{align*}
\int_{\mathbb{R}}e^{-2\lambda^{m}t}\lim_{\varepsilon\to 0}\left(Q_{\varepsilon}(\lambda)+\sum_{j=1}^{n}\frac{c_{2}(\kappa_{j})(1-{\theta_{\kappa_j}}(|w_{j}|))}{|w_{j}|^{1-\kappa_{i}}}A_{j} \right)d\lambda=2f(0)\int_{\mathbb{R}}e^{-2\lambda^{m}t}d\lambda <\infty.
 \end{align*}
Now, let us consider the first integral in (\ref{m1})
\begin{align}\label{int21}
&\int_{\mathbb{R}}e^{-2\lambda^{m}t}\left(Q_{\varepsilon}(\lambda)+\sum_{j=1}^{n}\frac{c_{2}(\kappa_{j})(1-{\theta_{\kappa_j}}(|w_{j}|))}{|w_{j}|^{1-\kappa_{j}}}A_{j} \right)d\lambda\nonumber\\&=\int_{\mathbb{R}}e^{-2\lambda^{m}t}\sum_{j=1}^{n}A_{j}\frac{c_{2}(\kappa_{j})(1-{\theta_{\kappa_j}}(|\lambda\sqrt{\varepsilon}+w_{j}|))}{2|\lambda\sqrt{\varepsilon}+w_{j}|^{1-\kappa_{j}}}\,d\lambda\nonumber\\&+\int_{\mathbb{R}}e^{-2\lambda^{m}t}\sum_{j=1}^{n}A_{j}\frac{c_{2}(\kappa_{j})(1-{\theta_{\kappa_j}}(|\lambda\sqrt{\varepsilon}-w_{j}|))}{2|\lambda\sqrt{\varepsilon}-w_{j}|^{1-\kappa_{j}}}\, d\lambda\nonumber\\&+\int_{\mathbb{R}}e^{-2\lambda^{m}t}\sum_{j=1}^{n}A_{j}c_{2}(\kappa_{j})\frac{(1-{\theta_{\kappa_j}}(|w_{j}|))}{|w_{j}|^{1-\kappa_{j}}} \,d\lambda\nonumber\\& \quad =I_{1}(\varepsilon)+I_{2}(\varepsilon)+\int_{\mathbb{R}}e^{-2\lambda^{m}t}\sum_{j=1}^{n}A_{j} c_{2}(\kappa_{j})\frac{(1-{\theta_{\kappa_j}}(|w_{j}|))}{|w_{j}|^{1-\kappa_{j}}}\, d\lambda.
\end{align}
Due to the symmetry of the integrands in $I_1$ and $I_2$, we will study only the integral $I_2(\varepsilon).$ Let us introduce the change of variables $u:={\lambda}\sqrt{\varepsilon}$ and split the integration  in $I_2(\varepsilon)$ into two regions, $|u| \leq \varepsilon^{1/2-\delta}$ and $|u| > \varepsilon^{1/2-\delta},$ $ \delta\in (0,1/2)$.
\begin{align}\label{I2_1}
    I_2(\varepsilon) = \varepsilon^{-1/2}\int_{|u| \leq \varepsilon^{1/2-\delta}} e^{-2\frac{u^{m}}{{\varepsilon^{m/2}}}t} \sum_{j=1}^{n}A_{j}\frac{c_{2}(\kappa_{j})(1-{\theta_{\kappa_j}}(|u-w_{j}|))}{2|u-w_{j}|^{1-\kappa_{j}}}\, du \nonumber\\ + \,\varepsilon^{-1/2}\int_{|u| > \varepsilon^{1/2-\delta}} e^{ -2\frac{u^{m}}{{\varepsilon^{m/2}}}t} \sum_{j=1}^{n}A_{j}\frac{c_{2}(\kappa_{j})(1-{\theta_{\kappa_j}}(|u-w_{j}|))}{2|u-w_{j}|^{1-\kappa_{j}}}\, du.
\end{align}
For sufficiently small $\varepsilon,$ $(1-{\theta_{\kappa_j}}(|u-w_{j}|))/{|u-w_{j}|^{1-\kappa_{j}}}$ is an increasing function of $u$ on the interval $[0,\varepsilon^{{1/2}-\delta}]$. Hence, on this interval, $({1-{\theta_{\kappa_j}}(|w_{j}|)})/{|w_{j}|^{1-\kappa_{j}}}$ and $({1-{\theta_{\kappa_j}}(|\varepsilon^{1/2-\delta}-w_{j}|)})/{|\varepsilon^{1/2-\delta}-w_{j}|^{1-\kappa_{j}}}$ are its smallest and largest values, respectively.
Therefore, the first integral in~(\ref{I2_1}) can be bounded  as
\begin{align} \label{bound11}
&\sum_{j=1}^{n}A_{j}\frac{c_{2}(\kappa_{j})(1-{\theta_{\kappa_j}}(|w_{j}|))}{2|w_{j}|^{1-\kappa_{j}}}\int_{|u| \leq \varepsilon^{1/2-\delta}} \frac{e^{-2\frac{u^m}{\varepsilon^{m/2}}t}}{\varepsilon^{1/2}}\, du \nonumber\\\quad&\quad\le\varepsilon^{-1/2}\int_{|u| \leq \varepsilon^{1/2-\delta}} e^{ -2\frac{u^m}{{\varepsilon^{m/2}}}t} \,\sum_{j=1}^{n}A_{j} \frac{c_{2}(\kappa_{j})(1-{\theta_{\kappa_j}}(|u-w_{j}|))}{2|u-w_{j}|^{1-\kappa_{j}}}\, du \nonumber\\\quad& \quad\leq \sum_{j=1}^{n}A_{j}\frac{c_{2}(\kappa_{j})(1-{\theta_{\kappa_j}}(|\varepsilon^{1/2-\delta}-w_{j}|))}{2|\varepsilon^{1/2-\delta}-w_{j}|^{1-\kappa_{j}}}\int_{|u| \leq \varepsilon^{1/2-\delta}} \frac{e^{-2\frac{u^m}{\varepsilon^{m/2}}t}}{\varepsilon^{1/2}}\, du.\end{align}
It follows from \eqref{bound11} and
\[\lim_{\varepsilon\to 0}\int_{|u| \leq \varepsilon^{1/2-\delta}} \frac{e^{-2\frac{u^m}{\varepsilon^{m/2}}t}}{\varepsilon^{1/2}}du=\int_{|u| \leq \varepsilon^{-\delta}} {e^{ -2{u^{m}t}}}du=\int_{\mathbb{R}}e^{-2u^{m}t}du\]
 that
\begin{equation}\label{Q011}
\lim_{\varepsilon \to 0} \varepsilon^{-1/2}\int_{|u| \leq \varepsilon^{1/2-\delta}} e^{-2\frac{u^m}{\varepsilon^{m/2}}t}\sum_{j=1}^{n}
\frac{c_{2}(\kappa_{j})(1-{\theta_{\kappa_j}}(|u-w_{j}|))}{2|u-w_{j}|^{1-\kappa_{j}}}A_{j} du = \frac{f(0)}{2}\int_{\mathbb{R}} e^{-2{\lambda}^{m}t}d{\lambda}.
\end{equation}
For $t>0$, the last integral in (\ref{I2_1}) can be estimated as
\begin{align}\label{estint221}
0&<\varepsilon^{-1/2}\int_{|u| > \varepsilon^{1/2-\delta}} e^{-2\frac{u^m}{\varepsilon^{m/2}}t} \,\sum_{j=1}^{n}A_{i}\frac{c_{2}(\kappa_{j})(1-{\theta_{\kappa_j}}(|u-w_{j}|))}{2|u-w_{i}|^{1-\kappa_{j}}} \,du\nonumber\\&\le\varepsilon^{-1/2} e^\frac{-2{\left(\varepsilon^{1/2-\delta}\right)^{m}t}}{\varepsilon^{m/2}}\int_{|u| > \varepsilon^{1/2-\delta}} \,\sum_{j=1}^{n}A_{i}\frac{c_{2}(\kappa_{j})(1-{\theta_{\kappa_j}}(|u-w_{j}|))}{2|u-w_{j}|^{1-\kappa_{j}}}\, du\nonumber\\&\leq \frac{e^{-2\varepsilon^{-m\delta}t}}{\varepsilon^{1/2}}\int_{\mathbb{R}} \,\sum_{j=1}^{n}A_{i}\frac{c_{2}(\kappa_{j})(1-{\theta_{\kappa_j}}(|u-w_{j}|))}{2|u-w_{j}|^{1-\kappa_{j}}}\, du\to 0,\quad \mbox{when}\quad \varepsilon\to 0,
\end{align}
as due to the integrability of the spectral density, the last integral in (\ref{estint221}) is finite.

Hence, equality in (\ref{m1}) follows from the results (\ref{int21}), (\ref{I2_1}), (\ref{Q011}) and~(\ref{estint221}).
Therefore, by applying the generalized dominated convergence theorem, $\lim_{\varepsilon \to 0}E{\vert U_{\varepsilon}(t,x)-U_{0}(t,x)\vert}^{2}=0,$ which implies the convergence of the corresponding finite-dimensional distributions.

Finally, the formula {(\rm\ref{cov31})} follows from
\begin{align*}
{\rm Cov}&(U_{0}(t,x), U_{0}(t',x')) \\&= \mathbb{E} \left( \int_{\mathbb{R}} \int_{\mathbb{R}} \sum_{j=1}^{n} \frac{c_{2}(\kappa_{j})}{|w_j|^{1-\kappa_{j}}}A_j (1 - {\theta_{\kappa_j}}(|w_j|))
\, e^{i\lambda_{1} x - t \lambda_{1}^{m}}
\, e^{-i\lambda_{2} x' - t' \lambda_{2}^{m}} \, W(d\lambda_{1}) W(d\lambda_{2}) \right) \\
&= \sum_{j=1}^{n} \frac{c_{2}(\kappa_{j})}{|w_j|^{1-\kappa_{j}}}A_j (1 - {\theta_{\kappa_j}}(|w_j|))
\int_{\mathbb{R}} e^{i\lambda(x - x')} e^{-(t + t') \lambda^{m}} \, d\lambda = |(t+t')^{1/m}\lambda=\tilde{\lambda}|\\&=\sum_{j=1}^{n} \frac{c_{2}(\kappa_{j})}{|w_j|^{1-\kappa_{j}}}A_j (1 - {\theta_{\kappa_j}}(|w_j|))(t+t')^{-1/m}\int_{\mathbb{R}}e^{i\tilde{\lambda}{(x-x')}/{(t+t')^{1/m}}}e^{-\tilde{\lambda^{m}}}d\tilde{\lambda}\\&=\sum_{j=1}^{n} \frac{c_{2}(\kappa_{j})}{|w_j|^{1-\kappa_{j}}}A_j (1 - {\theta_{\kappa_j}}(|w_j|))(t+t')^{-1/m}\,\mathcal{F}\left( \frac{x-x'}{(t+t')^{1/m}}\right),
\end{align*}
where $\mathcal{F}(\cdot)$ is the inverse Fourier transform of the function $e^{-\lambda^{m}}$. Notice that $e^{-\lambda^{m}}$ equals to $2m^{-1}\Gamma(1/m)f_{m}(0,1;\lambda)$, where $f_{m}(\mu,\sigma;\lambda)$ is the probability density of the generalized normal distribution, see \cite{pogany2010characteristic}. Therefore, by Theorem 2.1 in \cite{pogany2010characteristic},  \[\mathcal{F}\left(\frac{x-x'}{(t+t')^{1/m}}\right)=2m^{-1}\Gamma(1/m)\frac{\sqrt{\pi}}{\Gamma(1/m)}{{}_{1}\Psi_{1}}\,
\left[
\begin{array}{@{}l@{}}
(1/m,2/m)\\
(1/2,1)
\end{array}
\,; -\frac{(x-x')^{2}}{4(t+t')^{2/m}}
\right],\]
and by \eqref{note*} the covariance function of $U_{0}(t,x)$ is equal to
 \begin{align*}
     {\rm Cov}( U_{0}(t,x), U_{0}(t',x'))& =\sum_{j=1}^{n}c_{1}(\kappa_{j})A_{j}K_{\frac{\kappa_{j}-1}{2}}(|w_{j}|)|w_{j}|^{\frac{\kappa_{j}-1}{2}}\\&\times \frac{2\sqrt{\pi}}{m(t+t')^{1/m}}{{}_{1}\Psi_{1}}\,
\left[
\begin{array}{@{}l@{}}
(1/m,2/m)\\
(1/2,1)
\end{array}
\,; -\frac{(x-x')^{2}}{4(t+t')^{2/m}}
\right],
 \end{align*}
which completes the proof of the theorem.
\end{proof}
\begin{Remark}
Theorem~{\rm\ref{the3}} is a special case of Theorem~{\rm\ref{the31}} for $m=2$ and $n=1.$
\end{Remark}

\begin{example}\label{ex2} The values of $\kappa_{j}$ and $w_j$ were chosen to be the same as in Example~{\rm \ref{ex1}}. However, since $A_0=0$ in Theorem~{\rm\ref{the31}}, to ensure that the sum of the $A_j$ values equals~$1,$ $A_1$ was changed to $0.4$.

Figure~{\rm \ref{fig:a}} presents the realization of the random field $U_{0}(t,x)$ for the 4th-order case, i.e., $m=4$. To obtain this realization, the following approximation of the stochastic integral in \eqref{*11} was used
\begin{equation}
       \int_{\mathbb R}e^{i\lambda x- \lambda^{m}t}W(d\lambda)\approx \sum_{j=-N}^{N-1} e^{i\lambda_j x- \lambda_j^{m}\,t}W(\Delta\lambda_j),
   \end{equation}
   where  $\lambda_j\in \{-N\Delta,-(N-1) \Delta,...,-\Delta,0,\Delta,...,N\Delta\},$ $\Delta\lambda_j=\lambda_{j+1}-\lambda_{j}$, $\Delta=0.05,$ and $N=1000$.

   The simulation illustrates the evolution of the field $U_0(t, x)$. For times close to 0, the field appears rough and exhibits rapid spatial oscillations, which arise from the random initial conditions. As time progresses, the field becomes smoother. This behavior is due to the exponential decay factor $e^{-\lambda^4 t}$, which suppresses fluctuations. Figure~{\rm \ref{fig:b}} presents the covariance function $\text{Cov}(U_0(t,x), U_0(t',x'))$, which has its largest values when $x \approx x'$ and $t, t' \approx 0$. The covariance decays rapidly when either the spatial separation $|x - x'|$ or the total time $t + t'$ increases.

Figure~{\rm \ref{fig3}} illustrates the influence of the parameter $m$ on the covariance structure. In Figure~{\rm \ref{fig:c}}, where the spatial separation is fixed at $|x - x'| = 1.5$, the covariance decays gradually as $t + t'$ increases. Larger values of $m$ yield correspondingly larger covariance values. In Figure~{\rm \ref{fig:d}}, with the time fixed at $t + t' = 5$, the covariance exhibits oscillations and quickly vanishes as the spatial separation $|x - x'|$ increases. Higher values of $m$ result in more pronounced oscillatory behavior.
 \begin{figure}[htbp]
    \centering
    \begin{subfigure}{0.5\textwidth}
        \centering
        \includegraphics[width=1.1\linewidth, trim=7cm 1.8cm 2.2cm 5.0cm, clip]{"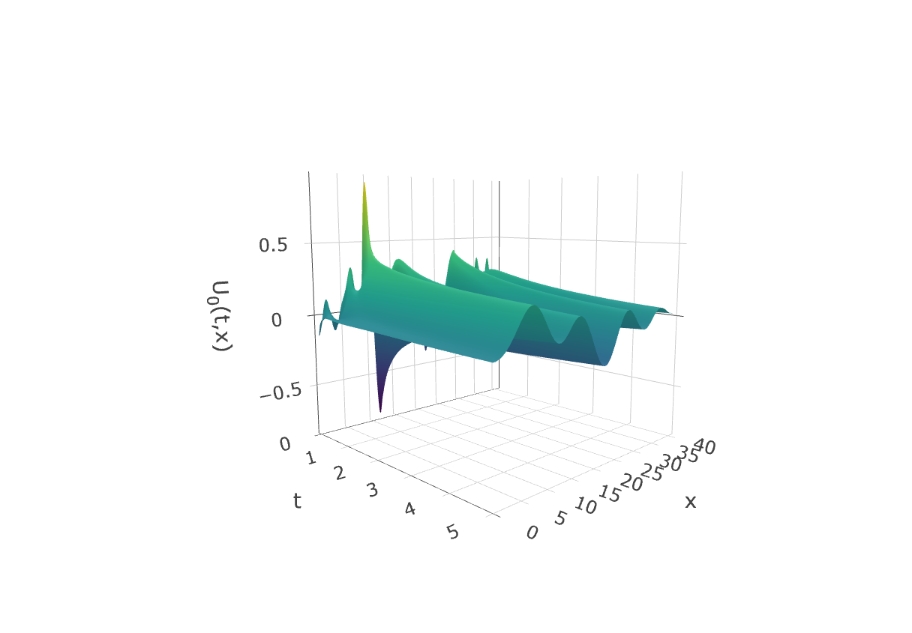"}
        \caption{Realization of $U_{0}(t,x)$}
        \label{fig:a}
    \end{subfigure}\hfill
    \begin{subfigure}{0.5\textwidth}
        \centering
        \includegraphics[width=1.1\linewidth, trim=5.8cm 1.8cm 2cm 4.8cm, clip]{"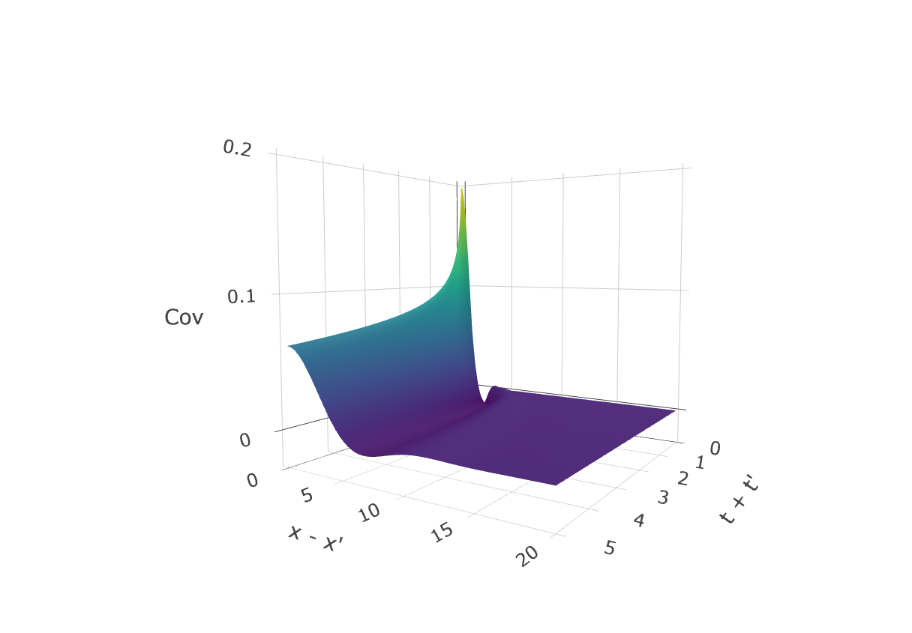"}
        \caption{Covariance function of $U_{0}(t,x)$}
        \label{fig:b}
    \end{subfigure}
    \caption{The case of the 4th-order heat equation with $A_{0}=0$}
    \label{fig2}
\end{figure}

\begin{figure}[htbp]
    \centering
    \begin{subfigure}{0.485\textwidth}
        \centering
        \includegraphics[width=\linewidth]{"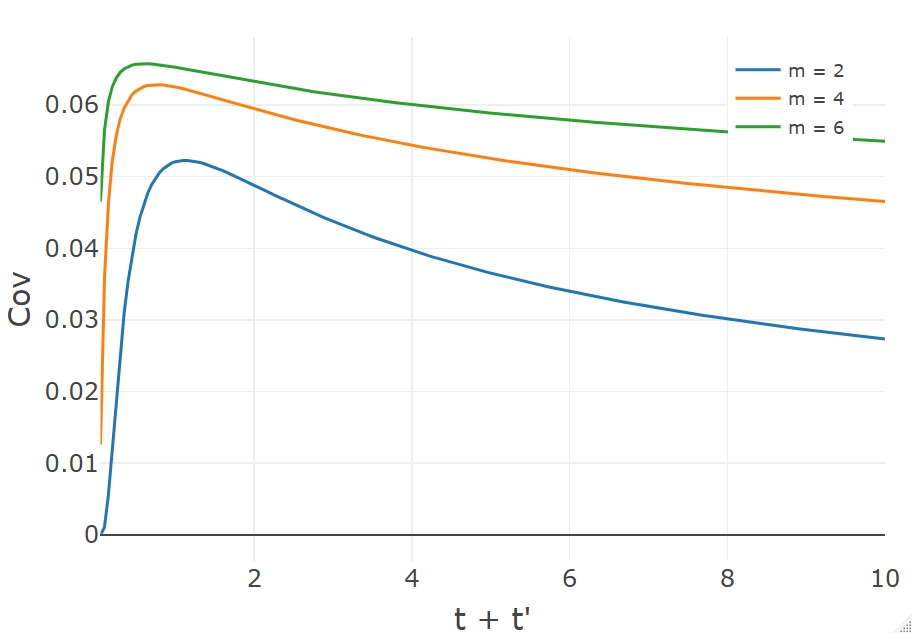"}
        \caption{Temporal covariance function,\,$x-x'=1.5$}
        \label{fig:c}
    \end{subfigure}\hfill
    \begin{subfigure}{0.48\textwidth}
        \centering
        \includegraphics[width=\linewidth]{"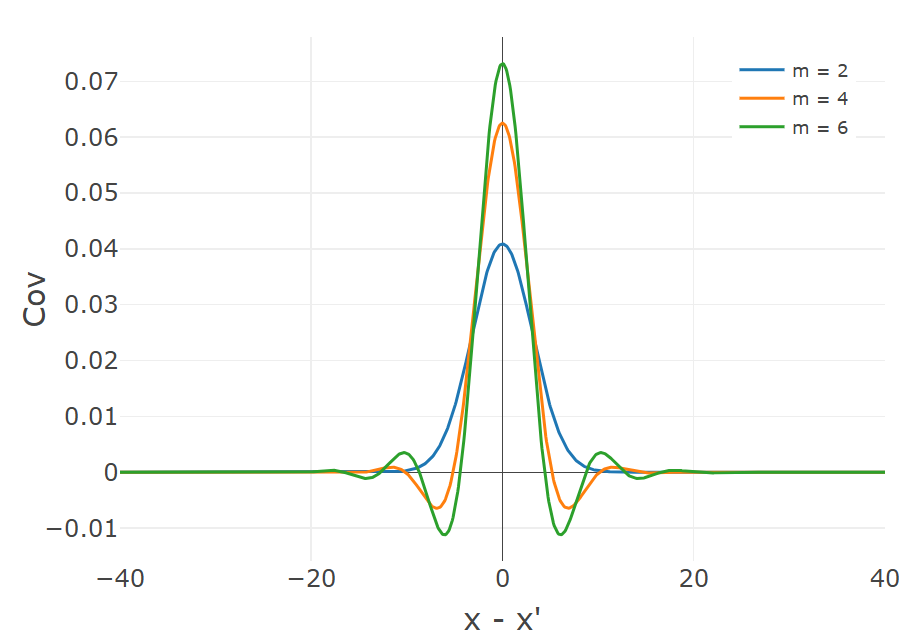"}
        \caption{Spatial covariance function, $t+t'=5$}
        \label{fig:d}
    \end{subfigure}
    \caption{Covariance structures of $U_{0}(t,x)$ for $m$th-order heat equations with $A_{0}=0$}
    \label{fig3}
\end{figure}

\end{example}
\begin{theorem}\label{the32}
Consider a random field \( u(t,x), \ t>0, \ x\in \mathbb{R} \), defined by~{\rm(\ref{solution1})} with \( \eta(x), \ x\in \mathbb{R} \), which has a covariance function satisfying Assumption~{\rm \ref{AssB}} with $A_{0}\neq 0$. If \( \varepsilon \to 0 \), then the finite-dimensional distributions of the random~fields
    \begin{equation*}
        U_{\varepsilon}(t,x)=\varepsilon^{-\kappa_0/4}\,\left(u\left(\frac{t}{\sqrt{\varepsilon^{m}}}, \frac{x}{\sqrt{\varepsilon}}\right)-Eu\left(\frac{t}{\sqrt{\varepsilon^{m}}}, \frac{x}{\sqrt{\varepsilon}}\right)\right)
    \end{equation*}
    converge weakly to the finite-dimensional distributions of the zero-mean Gaussian random field
  \begin{equation}\label{eq:11}
U_{0}(t,x)
= \sqrt{A_{0}\,c_{2}(\kappa_{0})}\,
\int_{\mathbb{R}}
\frac{e^{\,i\lambda x - \lambda^{m} t}}{|\lambda|^{\frac{1-\kappa_{0}}{2}}}\, W(d\lambda)
\end{equation}
 with the covariance function
 \begin{align}\label{cov311}
     {\rm Cov}( U_{0}(t,x), U_{0}(t',x')) &= A_{0}\,c_{2}(\kappa_{0})\int_{\mathbb R}|\lambda|^{\kappa_{0}-1}\cos\left(\lambda(x-x')\right)e^{-(t+t')\lambda^{m}}d\lambda\nonumber\\
     &\hspace{-1cm}=\frac{2\sqrt{\pi}\,A_{0}\,c_{2}(\kappa_{0})}{m(t+t')^{\kappa_{0}/m}}\;
{}_{1}\Psi_{1}\!\left[
\begin{array}{c}
(\kappa_{0}/m,\;2/m)\\[2pt]
(1/2,\;1)
\end{array}
;\;
-\frac{(x-x')^{2}}{4\,(t+t')^{2/m}}
\right].
 \end{align}
\end{theorem}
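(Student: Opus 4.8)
The plan is to follow the proof of Theorem~\ref{the31} almost verbatim; the only essential difference is that the normalization $\varepsilon^{-\kappa_0/4}$ rescales the spectrum so that the singularity of $f$ at the origin (the $j=0$ summand) becomes dominant, while the singularities at $\pm w_j$, $j\ge 1$, wash out, being at a fixed positive distance from zero. As in~\eqref{thm1}, after subtracting the mean only the stochastic part of $u$ survives; the change of variables $\lambda=\sqrt\varepsilon\,\tilde\lambda$ together with the scaling $W(d(\sqrt\varepsilon\,\tilde\lambda))\overset{d}{=}\varepsilon^{1/4}W(d\tilde\lambda)$ gives, in the sense of finite-dimensional distributions,
\[
U_\varepsilon(t,x)\overset{d}{=}\int_{\mathbb R}e^{i\tilde\lambda x-\tilde\lambda^m t}\sqrt{\varepsilon^{(1-\kappa_0)/2}f(\sqrt\varepsilon\,\tilde\lambda)}\,W(d\tilde\lambda).
\]
By~\eqref{spectral01}, \eqref{lem} and~\eqref{note*} the $j=0$ term of $f$ equals $A_0c_2(\kappa_0)(1-\theta_{\kappa_0}(|\lambda|))|\lambda|^{\kappa_0-1}$; since $\theta_{\kappa_0}(0)=0$ and each $f_{\kappa_j,w_j}$ with $j\ge1$ is continuous and bounded near the origin, for every fixed $\tilde\lambda\ne0$ one obtains $\varepsilon^{(1-\kappa_0)/2}f(\sqrt\varepsilon\,\tilde\lambda)\to A_0c_2(\kappa_0)|\tilde\lambda|^{\kappa_0-1}$ as $\varepsilon\to0$, the prefactor $\varepsilon^{(1-\kappa_0)/2}\to0$ (here $\kappa_0<1$) killing the bounded $j\ge1$ contributions. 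This identifies the candidate limit~\eqref{eq:11}.

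Next I would prove $R(t,x):=\mathbb E(U_\varepsilon(t,x)-U_0(t,x))^2\to0$. By the isometry,
\[
R(t,x)=\int_{\mathbb R}e^{-2\tilde\lambda^m t}\left(\sqrt{\varepsilon^{(1-\kappa_0)/2}f(\sqrt\varepsilon\,\tilde\lambda)}-\sqrt{A_0c_2(\kappa_0)}\,|\tilde\lambda|^{(\kappa_0-1)/2}\right)^{2}d\tilde\lambda,
\]
whose integrand tends to $0$ pointwise by the previous step. Using $(\sqrt a-\sqrt b)^2\le a+b$, the generalized dominated convergence theorem \cite[p.~59]{Folland1999} applies once one checks that $\varepsilon^{(1-\kappa_0)/2}\int_{\mathbb R}e^{-2\tilde\lambda^m t}f(\sqrt\varepsilon\,\tilde\lambda)\,d\tilde\lambda\to A_0c_2(\kappa_0)\int_{\mathbb R}e^{-2\tilde\lambda^m t}|\tilde\lambda|^{\kappa_0-1}d\tilde\lambda$, the latter being finite for $t>0$ since $\kappa_0-1>-1$ and by the exponential factor. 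Reverting to $u=\sqrt\varepsilon\,\tilde\lambda$ rewrites the left integral as $\varepsilon^{-\kappa_0/2}\int_{\mathbb R}e^{-2u^m t/\varepsilon^{m/2}}f(u)\,du$, which I would split over $|u|\le\varepsilon^{1/2-\delta}$ and $|u|>\varepsilon^{1/2-\delta}$ with $\delta\in(0,1/2)$, as in~\eqref{I2_1}: on the inner region a further substitution $u=\sqrt\varepsilon\,s$ shows the $j=0$ term gives $A_0c_2(\kappa_0)\int_{|s|\le\varepsilon^{-\delta}}e^{-2s^m t}(1-\theta_{\kappa_0}(\sqrt\varepsilon\,|s|))|s|^{\kappa_0-1}ds\to A_0c_2(\kappa_0)\int_{\mathbb R}e^{-2s^m t}|s|^{\kappa_0-1}ds$ (ordinary dominated convergence, $|\theta_{\kappa_0}|\le1$), while the bounded $j\ge1$ terms contribute $O(\varepsilon^{(1-\kappa_0)/2})\to0$; on the outer region $e^{-2u^m t/\varepsilon^{m/2}}\le e^{-2\varepsilon^{-m\delta}t}$ and the integrability of $f$ give $\varepsilon^{-\kappa_0/2}e^{-2\varepsilon^{-m\delta}t}\int_{\mathbb R}f(u)\,du\to0$. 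Once $R(t,x)\to0$, the analogous convergence of all covariances follows, and since the fields are zero-mean Gaussian, the finite-dimensional distributions converge.

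Finally I would compute the covariance of $U_0$: by the isometry, ${\rm Cov}(U_0(t,x),U_0(t',x'))=A_0c_2(\kappa_0)\int_{\mathbb R}|\lambda|^{\kappa_0-1}\cos(\lambda(x-x'))e^{-(t+t')\lambda^m}d\lambda$. Expanding the cosine in a power series, using $\int_0^\infty\lambda^{a-1}e^{-\lambda^m}d\lambda=m^{-1}\Gamma(a/m)$ and the Legendre duplication formula for $(2k)!$, this Fourier integral is identified with $\frac{2\sqrt\pi}{m}$ times the Fox-Wright function ${}_1\Psi_1$ evaluated at $-(x-x')^2/(4(t+t')^{2/m})$ with numerator parameter $(\kappa_0/m,2/m)$ and denominator parameter $(1/2,1)$, exactly as in the closing computation of Theorem~\ref{the31} (equivalently, via a weighted version of Theorem~2.1 in~\cite{pogany2010characteristic}); the substitution $\tilde\lambda=(t+t')^{1/m}\lambda$ then produces~\eqref{cov311}. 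I expect the main obstacle to be the domination step above: one must confirm that after the $\varepsilon^{-\kappa_0/4}$ normalization the rescaled spectral mass concentrates precisely on the profile $|\lambda|^{\kappa_0-1}$ from the origin while the $\pm w_j$ singularities genuinely vanish, so the two-region split and the bookkeeping of the powers of $\varepsilon$ are the delicate points, although they closely parallel the argument in the proof of Theorem~\ref{the31}.
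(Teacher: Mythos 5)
Your proposal is correct and follows essentially the same route as the paper: the same change of variables and white-noise scaling, identification of the limit through the rescaled spectral density, the generalized dominated convergence theorem with the bound $(\sqrt{a}-\sqrt{b})^{2}\le a+b$ (including the two-region split already used for Theorem~\ref{the31}), and the same Maclaurin-series/Legendre-duplication computation leading to the Fox--Wright representation of the covariance. The only cosmetic difference is that you work directly with $\varepsilon^{(1-\kappa_0)/2}f(\sqrt{\varepsilon}\,\tilde\lambda)$, whereas the paper factors out the limit profile $|\tilde\lambda|^{\kappa_0-1}$ and controls the ratio $Q_{\varepsilon}$; the substance of the argument is identical.
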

\begin{proof}
   Analogously to \eqref{thm1}, it holds in the sense of the finite-dimensional distributions that
   \begin{align*}
       U_{\varepsilon}(t,x) = \frac{1}{\varepsilon^{\kappa_0/4}}\int_{\mathbb{R}} e^{i\lambda \frac{x}{\sqrt{\varepsilon}}-\lambda^{m}{t}/{\sqrt{\varepsilon^{m}}}}\sqrt{f(\lambda)}W(d\lambda).
   \end{align*}
By (\ref{spectral01}), the change of variables \(\lambda = \tilde{\lambda}\sqrt{\varepsilon} \) and using  the scaling property \( W(d(\tilde{\lambda}\varepsilon))\overset{d}{=} \sqrt{\varepsilon} W(d\tilde{\lambda}), \) one obtains
\begin{align}\label{*1them}
U_{\varepsilon}(t,x)
&\overset{d}{=}\sqrt{A_{0}\,c_2(\kappa_0)}
\int_{\mathbb{R}}
\frac{e^{i\tilde{\lambda} x - \tilde{\lambda}^{m} t}}{|\tilde{\lambda}|^{\frac{1-\kappa_0}{2}}}
\Bigg(
\sum_{j=1}^{n}
\frac{A_{j}\,c_2(\kappa_{j})}{2\,A_{0}\,c_2(\kappa_0)}\,|\tilde{\lambda}\sqrt{\varepsilon}|^{1-\kappa_0}
\left(
\frac{1-\theta_{\kappa_j}\!\big(|\tilde{\lambda}\sqrt{\varepsilon}+w_{j}|\big)}
     {|\tilde{\lambda}\sqrt{\varepsilon}+w_{j}|^{\,1-\theta_{\kappa_j}}}\right.\nonumber
\\
&\quad +\left.
\frac{1-\theta_{\kappa_j}\!\big(|\tilde{\lambda}\sqrt{\varepsilon}-w_{j}|\big)}
     {|\tilde{\lambda}\sqrt{\varepsilon}-w_{j}|^{\,1-\kappa_{j}}}
\right)
\;+\;
\big(1-\theta_{\kappa_{0}}\!\big(|\tilde{\lambda}\sqrt{\varepsilon}|\big)\big)
\Bigg)^{\!1/2}
\, W(\mathrm{d}\tilde{\lambda}) .
\end{align}

Therefore, it follows from (\ref{eq:11}) and (\ref{*1them}) that
    \begin{align}\label{eqth2}
&R(t,x) = \mathbb{E}\left(U_{\varepsilon}(t,x)-U_{0}(t,x)\right)^{2}=
 \mathbb {E}\Bigg(\sqrt{A_{0}\,c_2(\kappa_0)}\int_{\mathbb{R}}\frac{e^{i{\lambda}x- {\lambda}^{m}t}}{|\lambda|^{\frac{1-\kappa_0}{2}}}  \nonumber \\&\times\Bigg({\sum_{j=1}^{n}\frac{A_{j}\,c_2(\kappa_{j})}{2\,A_{0}\,c_2(\kappa_0)}|{\lambda}\sqrt{\varepsilon}|^{1-\kappa_0}\left(\frac{1-{\theta_{\kappa_{j}}}(|{\lambda}\sqrt{\varepsilon}+w_{j}|)}{|{\lambda}\sqrt{\varepsilon}+w_{j}|^{1-\kappa_{j}}}+\frac{1-{\theta_{\kappa_{j}}}(|{\lambda}\sqrt{\varepsilon}-w_{j}|)}{|{\lambda}\sqrt{\varepsilon}-w_{j}|^{1-\kappa_{j}}}\right)}\nonumber\\&+(1-{\theta_{\kappa_{0}}}(|{\lambda}\sqrt{\varepsilon}|)\Bigg)^{1/2}W(d{\lambda})  -\sqrt{A_{0}\,c_2(\kappa_{0})}\int_{\mathbb{R}}\frac{e^{i\lambda x- \lambda^{m}t}}{|\lambda|^{\frac{1-\kappa_0}{2}}}W(d{\lambda})\Bigg)^{2}\nonumber\\
& = A_{0}\,c_2(\kappa_0)\int_{\mathbb{R}} \frac{e^{- 2\lambda^{m}t}}{|\lambda|^{{1-\kappa_0}}}\left(\sqrt{Q_{\varepsilon}(\lambda)}-1\right)^{2}d\lambda,
\end{align}
where
\begin{align*}
    Q_{\varepsilon}({\lambda}):=&{\sum_{j=1}^{n}\frac{A_{j}\,c_2(\kappa_{j})}{2\,A_{0}\,c_2(\kappa_0)}|{\lambda}\sqrt{\varepsilon}|^{1-\kappa_0}\left(\frac{1-\theta_{j}(|{\lambda}\sqrt{\varepsilon}+w_{j}|)}{|{\lambda}\sqrt{\varepsilon}+w_{j}|^{1-\kappa_{j}}}+\frac{1-\theta_{j}(|{\lambda}\sqrt{\varepsilon}-w_{j}|)}{|{\lambda}\sqrt{\varepsilon}-w_{j}|^{1-\kappa_{j}}}\right)}\\&+(1-\theta_0(|{\lambda}\sqrt{\varepsilon}|).
\end{align*}
Note that $ \lim_{\varepsilon\to 0}Q_{\varepsilon}(\lambda)=1.$
Therefore, the integrand in (\ref{eqth2}) converges pointwise to $0$.
As it holds
$\left(\sqrt{Q_{\varepsilon}(\lambda)}-1\right)^{2} \leq Q_{\varepsilon}(\lambda)+1$,
one may apply the generalized Lebesgue's dominated convergence theorem, see \cite[p.59]{Folland1999}. To justify its conditions, one needs to show that the limit
\begin{align}\label{mm1}
&\lim_{\varepsilon\to 0}A_{0}c_{2}
(\kappa_0)\int_{\mathbb{R}} \frac{e^{-2 \lambda^{m}t}}{|\lambda|^{{1-\kappa_0}}}
\left(Q_{\varepsilon}(\lambda)+1 \right)d\lambda
=A_{0}c_{2}
(\kappa_0)\int_{\mathbb{R}}\frac{e^{- 2\lambda^{m}t}}{|\lambda|^{{1-\kappa_0}}}
\lim_{\varepsilon\to 0}\left(Q_{\varepsilon}(\lambda)+1\right)d\lambda
< \infty.
\end{align}
The finiteness of the last integral in (\ref{mm1}) follows from the limit of $Q_{\varepsilon}(\lambda)$ stated before and the condition $\kappa_{0}\in(0,1)$, as
 \begin{align}\label{finite 1}
&A_{0}c_2(\kappa_0)\int_{\mathbb{R}}\frac{e^{- 2\lambda^{m}t}}{|\lambda|^{{1-\kappa_0}}}\lim_{\varepsilon\to 0}\left(Q_{\varepsilon}(\lambda)+1 \right)d\lambda=2A_{0}c_2(\kappa_0)\int_{\mathbb{R}}\frac{e^{- 2\lambda^{m}t}}{|\lambda|^{{1-\kappa_0}}}d\lambda <\infty.
 \end{align}

Now, the integral on the left-hand side of (\ref{mm1}) can be rewritten as
\begin{align}\label{int212}
&\int_{\mathbb{R}}\frac{e^{-2\lambda^{m}t}}{|\lambda|^{1-\kappa_0}}\left(Q_{\varepsilon}(\lambda)+1\right)d\lambda=\int_{\mathbb{R}}\frac{e^{-2\lambda^{m}t}}{|\lambda|^{1-\kappa_0}}\sum_{j=1}^{n}|\lambda\sqrt{\varepsilon}|^{1-\kappa_0}\frac{A_{j}\,c_{2}(\kappa_{j})(1-{\theta_{\kappa_j}}(|\lambda\sqrt{\varepsilon}+w_{j}|))}{2\,A_{0}\,c_2(\kappa_0)|\lambda\sqrt{\varepsilon}+w_{j}|^{1-\kappa_{j}}}\,d\lambda\nonumber\\& +\int_{\mathbb{R}}\frac{e^{-2\lambda^{m}t}}{|\lambda|^{1-\kappa_0}}\sum_{j=1}^{n}|\lambda\sqrt{\varepsilon}|^{1-\kappa_0}\frac{A_{j}\,c_{2}(\kappa_{j})(1-{\theta_{\kappa_j}}(|\lambda\sqrt{\varepsilon}-w_{j}|))}{2\,A_{0}\,c_{2}(\kappa_0)|\lambda\sqrt{\varepsilon}-w_{j}|^{1-\kappa_{j}}}\, d\lambda \nonumber\\&+\int_{\mathbb{R}}\frac{e^{-2\lambda^{m}t}}{|\lambda|^{1-\kappa_0}}(1-{\theta_{\kappa_{0}}}(|\lambda\sqrt{\varepsilon}|))d\lambda+\int_{\mathbb{R}}\frac{e^{-2\lambda^{m}t}}{|\lambda|^{1-\kappa_0}}d\lambda =I_{1}(\varepsilon)+I_{2}(\varepsilon)+I_{3}(\varepsilon)+I_{4}.
\end{align} Note, that $I_{1}(\varepsilon)+I_{2}(\varepsilon)\to 0$, as $\varepsilon\to 0$, and $I_{4}$ is finite. The integral $I_{3}(\varepsilon)\to I_{4}$, when $\varepsilon\to 0$. It follows from $\lim_{\varepsilon\to 0}\theta_{0}(\varepsilon)=0, \,\theta_{0}(\lambda)\in[0,1]$, and the dominated convergence theorem. Hence, the result in \eqref{mm1} follows from \eqref{finite 1} and \eqref{int212}. By the generalized dominated convergence theorem  $\lim_{\varepsilon \to 0}E{\vert U_{\varepsilon}(t,x)-U_{0}(t,x)\vert}^{2}=0,$ which implies the convergence of finite-dimensional distributions.

Finally, to obtain the formula {(\rm\ref{cov311})} note that
\begin{align*}
&{\rm Cov}(U_{0}(t,x), U_{0}(t',x')) \\&= \mathbb{E} \left( \int_{\mathbb{R}} \int_{\mathbb{R}} A_{0}\,c_{2}(\kappa_{0})|\lambda_1|^{(\kappa_{0}-1)/2}|\lambda_2|^{(\kappa_{0}-1)/2}
\, e^{i\lambda_{1} x - t \lambda_{1}^{m}}
 e^{-i\lambda_{2} x' - t' \lambda_{2}^{m}} \, W(d\lambda_{1}) W(d\lambda_{2}) \right) \\
&= A_{0}\,c_2(\kappa_0)
\int_{\mathbb{R}} |\lambda|^{\kappa_{0}-1}e^{i\lambda(x - x')} e^{-(t + t') \lambda^{m}} \, d\lambda\\&= 2\,A_{0}\,c_{2}(\kappa_{0})\int_{0}^{\infty}\lambda^{\kappa_{0}-1}\cos(\lambda(x-x'))e^{-(t+t')\lambda^{m}}d\lambda.
\end{align*}
By using the change of variables $(t+t')^{1/m}\lambda=\tilde{\lambda}$ and expanding the cosine into Maclaurin series, analogously to \cite{pogany2010characteristic}, one obtains that
\begin{align*}
    {\rm Cov}(U_{0}(t,x), U_{0}(t',x'))&=\frac{2\,A_{0}\,c_{2}(\kappa_{0})}{(t+t')^{1/m}}\int_{0}^{\infty}\left(\frac{\tilde{\lambda}}{(t+t')^{1/m}}\right)^{\kappa_{0}-1}\cos\left(\frac{\tilde{\lambda}(x-x')}{(t+t')^{1/m}}\right)e^{-\tilde{\lambda}^m}d\tilde{\lambda}\\
    &   =\frac{2\,A_{0}\,c_{2}(\kappa_{0})}{(t+t')^{\kappa_{0}/m}}\sum_{k=0}^{\infty}\frac{(-1)^{k}(x-x')^{2k}}{(2k)!(t+t')^{{2k}/{m}}}\int_{0}^{\infty}\tilde{\lambda}^{\kappa_{0}-1+2k}e^{-\tilde{\lambda}^{m}}d\lambda.  \end{align*}
    Let us $u=\tilde{\lambda}^{m}.$ Then, $ \tilde{\lambda}=u^{1/m},$ $d\tilde{\lambda}={m}^{-1}u^{{1/m}-1}du,$ and
   \begin{align*}\label{coeps}  {\rm Cov}(U_{0}(t,x), U_{0}(t',x'))&=\frac{2\,A_{0}\,c_{2}(\kappa_{0})}{m(t
   +t')^{\kappa_{0}/m}}\sum_{k=0}^{\infty}\frac{(-1)^{k}}{(2k)!}\left(\frac{(x-x')}{(t+t')^{1/m}}\right)^{2k}\int_{0}^{\infty}u^{\frac{\kappa_{0}}{m}+\frac{2k}{m}-1}e^{-u}du\nonumber\\
&=\frac{2\,A_{0}\,c_{2}(\kappa_{0})}{m(t+t')^{\kappa_{0}/m}}\sum_{k=0}^{\infty}\frac{\Gamma(\kappa_{0}/m+2k/m)}{\Gamma(2k+1)}\left(-\frac{(x-x')^{2}}{(t+t')^{2/m}} \right)^{k}.
 \end{align*}
The application of the Legendre duplication formula
$
\Gamma(2z)\;=\;{2^{\,2z-1}}\Gamma(z)\,\Gamma\!\left(z+1/2\right)/\sqrt{\pi}$
gives
\[\Gamma(2k+1)=2k\Gamma(2k)=2k\frac{2^{\,2k-1}}{\sqrt{\pi}}\;\Gamma(k)\,\Gamma\!\left(k+1/2\right)=\frac{2^{2k}}{\sqrt{\pi}}\,k!\,\Gamma(k+1/2),\] and by (\ref{psi11}) one obtains
 \begin{align*}
 {\rm Cov}(U_{0}(t,x), U_{0}(t',x'))&=\frac{2\sqrt{\pi}\,A_{0}\,c_{2}(\kappa_{0})}{m(t+t')^{\kappa_{0}/m}} \sum_{k=0}^{\infty}\frac{\Gamma(\kappa_{0}/m+2k/m)}{\Gamma(1/2+k)}\left(-\frac{(x-x')^{2}}{4(t+t')^{2/m}}\right)^{k}\frac{1}{k!}\\&=\frac{2\sqrt{\pi}\,A_{0}\,c_{2}(\kappa_{0})}{m(t+t')^{\kappa_{0}/m}}\;
{}_{1}\Psi_{1}\!\left[
\begin{array}{c}
(\kappa_{0}/m,\;2/m)\\[2pt]
(1/2,\;1)
\end{array}
;\;
-\frac{(x-x')^{2}}{4\,(t+t')^{2/m}}
\right],
\end{align*}
which completes the proof of the theorem.
\end{proof}
\begin{Remark}
    The limit fields $U_0(t,x)$ in Theorems~{\rm\ref{the31}}  and~{\rm\ref{the32}}  are stationary in space, but non-stationary in time.
\end{Remark}

\begin{example}\label{ex3} This example illustrates the limit random field $U_0(t,x)$ from Theorem~{\rm \ref{the32}}. The same values of $A_{j}$, $\kappa_{j}$, and $w_j$ are used as in Example~{\rm \ref{ex1}}. Figure~{\rm \ref{fig:a1}} shows a realization of the random field $U_0(t,x)$. The simulated field is rather similar to that in Example~{\rm \ref{ex2}}. However, it exhibits a greater scale of variation. This seems to be the result of the additional multiplicative term $|\lambda|^{\frac{1-\kappa_{0}}{2}}$ in the spectrum, which has the singularity at the origin.
Figure~{\rm \ref{fig:b1}} shows a slower spatial decay of the correlations compared to the corresponding plot in  Example~{\rm \ref{ex2}.}

Figures~{\rm \ref{fig:c1}} and~{\rm \ref{fig:d1}} illustrate the behavior of covariance functions for different values of $m.$ Both graphs indicate larger covariance values than those in Example~{\rm \ref{ex2}}. The first plot exhibits a time dependence structure that is similar to that in Example~{\rm \ref{ex2}}, but with a faster decay in the case $m=2$. However, there are several differences in the space dependencies. They persist over longer distances compared to those in Figure~{\rm \ref{fig:d}}, where they vanish more rapidly and exhibit oscillating behavior.
\begin{figure}[htbp]
\centering
\begin{subfigure}{0.47\textwidth}
    \centering
    \includegraphics[width=1.1\linewidth,trim=5.5cm 1.6cm 4cm 5.1cm,clip]{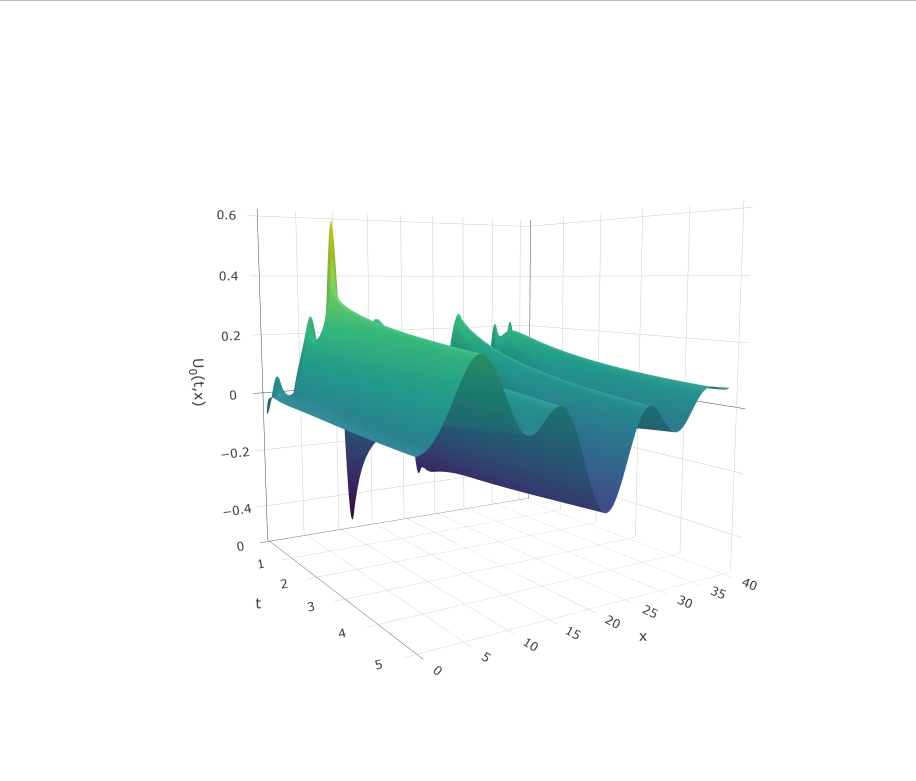}
    \caption{Realization of $U_{0}(t,x)$}
    \label{fig:a1}
\end{subfigure}\hfill
\begin{subfigure}{0.47\textwidth}
    \centering
    \includegraphics[width=1.1\linewidth,trim=5.3cm 1.6cm 5cm 4cm,clip]{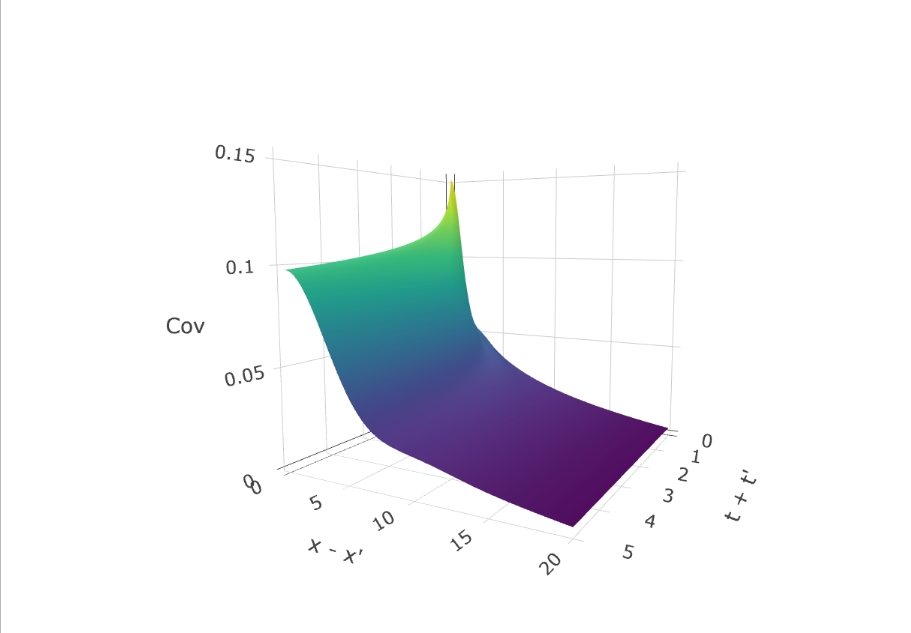}
    \caption{Covariance function of $U_{0}(t,x)$}
    \label{fig:b1}
\end{subfigure}
\caption{The case of the 4th-order heat equation with $A_{0}\neq0$}
\label{fig4}
\end{figure}

\begin{figure}[htbp]
\centering
\begin{subfigure}{0.485\textwidth}
    \centering
    \includegraphics[width=\linewidth]{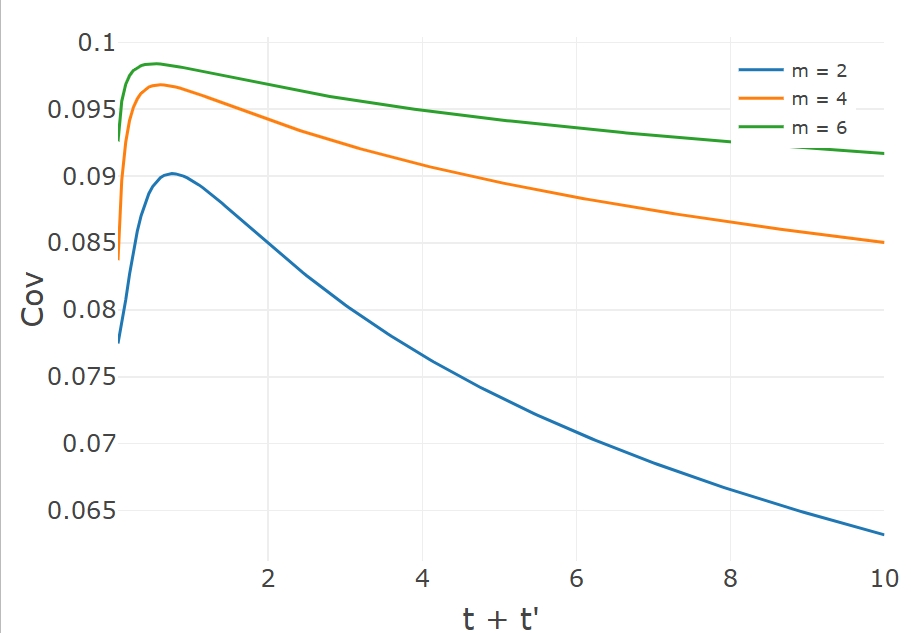}
    \caption{Temporal covariance function,\,$x-x'=1.5$}
    \label{fig:c1}
\end{subfigure}\hfill
\begin{subfigure}{0.48\textwidth}
    \centering
    \includegraphics[width=\linewidth]{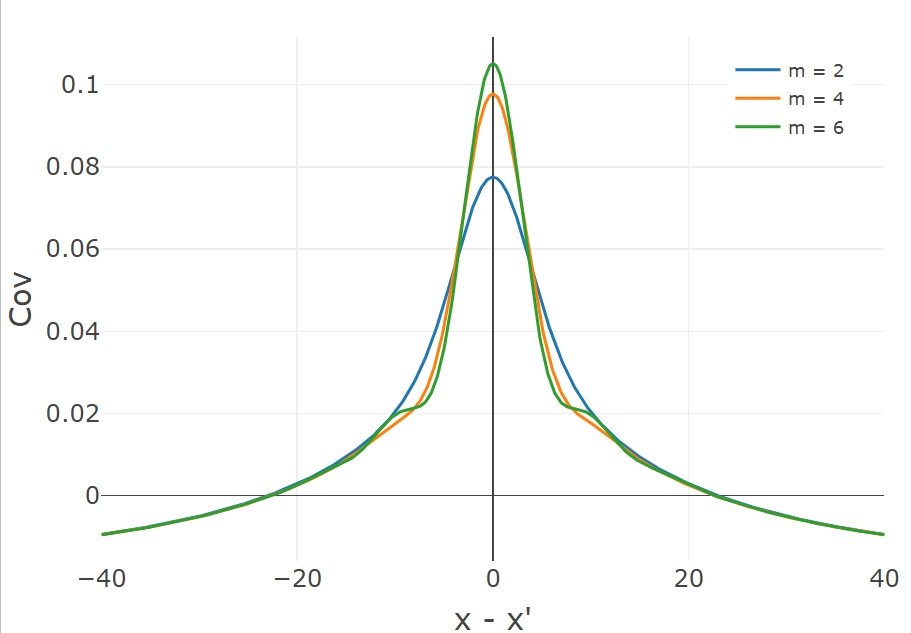}
    \caption{Spatial covariance function, $t+t'=5$}
    \label{fig:d1}
\end{subfigure}
\caption{Covariance structures of $U_{0}(t,x)$ for $m$th-order heat equations with $A_{0}\neq0$}
\label{fig5}
\end{figure}

\end{example}
\section{Limit theorems for odd order equations}\label{secodd}
In this section, we study the limiting behavior of solutions to odd-order heat-type PDEs with cyclic long-range dependent initial conditions. Unlike even-order equations, fundamental solutions of odd-order equations involve highly asymmetric and oscillatory Airy functions, which prevent the application of the previous approach to prove scaling limit theorems.

Let us consider the odd-order heat equations
\begin{equation}\label{eq15}
    \frac{\partial u(t, x)}{\partial t} = \mu \frac{\partial^m u(t, x)}{\partial x^m}, \quad t > 0,\ x \in \mathbb{R},\, \mu=\pm1,\quad m = 2k + 1, \, k\in \mathbb{N},
\end{equation}
with random initial condition \eqref{initial Airy}.
The deterministic solution to equation \eqref{eq15} is, see \cite{orsingher2012probabilistic},
\begin{equation*}
    u_{m}(t,x)=\frac{1}{2\pi}\int_{\mathbb{R}}e^{-i\lambda x-i\mu (-1)^{\frac{m-1}{2}}t\lambda^{m}}d\lambda=\frac{1}{\sqrt[m]{mt}}g_{m}\left( \frac{x}{\sqrt[m]{mt}}\right),
\end{equation*}
where $g_{m}(x)=\frac{1}{\pi}\int_{\mathbb{R}}\cos\left(\alpha x-\mu (-1)^{\frac{m-1}{2}}\frac{\alpha^m}{m}\right)d\alpha, \, x\in \mathbb{R}$.

\begin{Remark}
Note, that for $\mu=(-1)^{k}$ the function $g_{m}(\cdot)$ coincides with the $m$-order Airy function ${\rm Ai}_{m}(x)=\frac{1}{\pi}\int_{\mathbb{R}}\cos(\alpha x+{\alpha^m}/{m})\,d\alpha,$  see  {\rm\cite{marchione2024stable} }and {\rm \cite{Ansari}}.
\end{Remark}
Hence, analogously to \eqref{*}, the solution field is given by
\begin{align}\label{+}
     u(t, x) &= \int_{\mathbb{R}}\frac{M(x-y)}{\sqrt[3]{mt}}g_{m}\left(\frac{y}{\sqrt[m]{mt}} \right)+\frac{1}{2\pi}\int_{\mathbb{R}} e^{i \lambda x + i \mu (-1)^{\frac{m-1}{2}}\lambda^m t} Z(d\lambda)\nonumber\\&=\int_{\mathbb{R}}\frac{M(x-y)}{\sqrt[3]{mt}}g_{m}\left(\frac{y}{\sqrt[m]{mt}} \right)+ \frac{1}{2\pi} \int_{\mathbb{R}} e^{i\lambda x + i\mu (-1)^{\frac{m-1}{2}}\lambda^m t} \sqrt{f(\lambda)} \, W(d\lambda).
\end{align}

\begin{Remark}
For the odd-order heat equation, the same method of proof as in the even-order case can not be used. Indeed, if one were to formally follow  the proof of Theorem~{\rm{\ref{the31}}}, not justifying dominated convergence, the limit field would be of the form \[\tilde{U}_{0,1}(t,x)=c\int_{\mathbb{R}}e^{i\lambda x+i\mu(-1)^{\frac{m-1}{2}}\lambda^{m}t}W(d\lambda), \, x\in\mathbb{R},\, t>0.\]
However, this formally defined field does not make sense as \[{\rm Cov}(\tilde{U}_{0,1}(t,x), \tilde{U}_{0,1}(t',x'))=c^{2}\int_{\mathbb{R}}\cos(\lambda(x-x')+\mu(-1)^{\frac{m-1}{2}}\lambda^{m}(t-t'))d\lambda,\] and, therefore, ${\rm Var}(\tilde{U}_{0,1}(t,x))=+\infty.$

Similar, formally following the proof of Theorem~{\rm{\ref{the32}}}, the limit field would be given as \[\tilde{U}_{0,2}(t,x)=c\int_{\mathbb{R}}\frac{e^{i\lambda x+i\mu (-1)^{\frac{m-1}{2}}\lambda^{m}t}}{|\lambda|^{\frac{1-\kappa_0}{2}}}dW(\lambda),\]
and its covariance function as \[{\rm Cov}(\tilde{U}_{0,2}(t,x), \tilde{U}_{0,2}(t',x'))=c^{2}\int_{\mathbb{R}}\frac{\cos({\lambda (x-x')+i\mu (-1)^{\frac{m-1}{2}}\lambda^{m}(t}-t'))}{|\lambda|^{{1-\kappa_0}}}d\lambda,\] which again implies that  ${\rm Var}(\tilde{U}_{0,2}(t,x))=+\infty.$

\end{Remark}

To investigate the limit behavior of solutions to odd-order heat equations, we introduce an averaging approach.  Note that kernel-smoothed solutions appear in various applications. First, in practice, no measurement devices can capture a value of a physical field at a precise time-space point. Instead, due to their finite resolution, they record averaged values. Second, kernel smoothing serves as a regularization technique to reduce irregularity of solutions, high-frequency oscillations, and noise. In general, kernel-smoothing can be considered as a specific case of filtering.

Let us consider the Gaussian-type kernel
$g( x_1,x) = e^{-(x_1 - x)^2}$ and define a spatially averaged version of the solution as
\begin{equation}\label{sqr}
   {\overline{U}}_{\varepsilon}^g(t,x) = \varepsilon^{-1/4} \left[ \int_{\mathbb{R}} g(x_1,x) u\left(\frac{t_1}{\sqrt{\varepsilon^m}}, \frac{x_1}{\sqrt{\varepsilon}} \right)  dx_1 - \mathbb{E}\int_{\mathbb{R}} g(x_{1},x)u\left(\frac{t_1}{\sqrt{\varepsilon^m}}, \frac{x_1}{\sqrt{\varepsilon}} \right) \right]dx_1.
\end{equation}
\begin{theorem}\label{thm5.2}
    Consider a random field $u(t,x), \, t>0,\,x\in\mathbb{R},$ which is the solution~\eqref{+} to the initial value problem \eqref{eq15} and \eqref{initial Airy}. Let $\eta(x),\, x\in \mathbb{R}$, has a covariance function satisfying Assumption~{\rm \ref{AssB}} with $A_{0}=0$. If $\varepsilon\to 0$, then the finite-dimensional distributions of the random field $\overline{U}_{\varepsilon}^{g}(t,x)$ weakly converge to the finite-dimensional distributions of the zero-mean Gaussian random field
    \begin{align*}
        {U}_{0}^g(t,x)=\frac{1}{2\sqrt{\pi}}\sqrt{\sum_{j=1}^{n}c_{1}(\kappa_{j})A_{j}K_{\frac{\kappa_{j}-1}{2}}(|w_{j}|)|w_{j}|^{\frac{\kappa_{j}-1}{2}}}\int_{\mathbb R}e^{i(\lambda x+\mu (-1)^{\frac{m-1}{2}}\lambda^{m}t)}e^{-\lambda^{2}/4}W(d\lambda)
    \end{align*}
    with the covariance function
    \begin{align*}
        {\rm Cov}(U_{0}^{g}(t,x),U_{0}^{g}(t',x'))&=\sum_{j=1}^{n}\frac{c_{1}(\kappa_{j})A_{j}K_{\frac{\kappa_{j}-1}{2}}(|w_{j}|)}{4\pi|w_{j}|^{\frac{1-\kappa_{j}}{2}}}\\&\times\int_\mathbb{R}\cos(\lambda(x-x')+\mu (-1)^{\frac{m-1}{2}}(t-t')\lambda^{m})e^{-\lambda^{2}/2}d\lambda.
    \end{align*}
\end{theorem}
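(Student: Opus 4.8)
The plan is to follow the spectral-method template of Theorems~\ref{the31} and~\ref{the32}, exploiting the fact that, although for odd $m$ the factor $e^{i\mu(-1)^{(m-1)/2}\lambda^{m}t}$ is now purely oscillatory and provides no damping (this is precisely why the naive limit field fails, see the preceding Remark), the Gaussian kernel $g(x_{1},x)=e^{-(x_{1}-x)^{2}}$ supplies its own super-exponential cutoff in the frequency domain. First I would substitute the solution \eqref{+} into \eqref{sqr}; the deterministic $M$-term cancels against the subtracted expectation, leaving only the zero-mean stochastic integral. A stochastic Fubini argument --- legitimate since $g(x_{1},x)\,e^{i\lambda x_{1}/\sqrt{\varepsilon}}\sqrt{f(\lambda)}\in L_{2}(\mathbb{R}^{2})$, because $\int_{\mathbb{R}}g^{2}(x_{1},x)\,dx_{1}<\infty$ and $f\in L_{1}(\mathbb{R})$ --- allows interchanging the $dx_{1}$-integration with the Wiener integral, and the identity $\int_{\mathbb{R}}e^{-(x_{1}-x)^{2}}e^{\,i\lambda x_{1}/\sqrt{\varepsilon}}\,dx_{1}=\sqrt{\pi}\,e^{\,i\lambda x/\sqrt{\varepsilon}}\,e^{-\lambda^{2}/(4\varepsilon)}$, combined with the $1/(2\pi)$ factor in \eqref{+}, yields
\[
\overline{U}_{\varepsilon}^{g}(t,x)=\frac{1}{2\sqrt{\pi}}\int_{\mathbb{R}}e^{\,i\lambda x/\sqrt{\varepsilon}+i\mu(-1)^{\frac{m-1}{2}}\lambda^{m}t/\sqrt{\varepsilon^{m}}}\,e^{-\lambda^{2}/(4\varepsilon)}\sqrt{f(\lambda)}\,W(d\lambda).
\]
The change of variables $\lambda=\tilde\lambda\sqrt{\varepsilon}$ and the self-similarity of $W(\cdot)$, together with $\lambda^{m}/\sqrt{\varepsilon^{m}}=\tilde\lambda^{m}$ and $\lambda^{2}/(4\varepsilon)=\tilde\lambda^{2}/4$, then give
\[
\overline{U}_{\varepsilon}^{g}(t,x)\overset{d}{=}\frac{1}{2\sqrt{\pi}}\int_{\mathbb{R}}e^{\,i(\tilde\lambda x+\mu(-1)^{\frac{m-1}{2}}\tilde\lambda^{m}t)}\,e^{-\tilde\lambda^{2}/4}\sqrt{f(\tilde\lambda\sqrt{\varepsilon})}\,W(d\tilde\lambda),
\]
so that $U_{0}^{g}(t,x)$ is the same expression with $\sqrt{f(\tilde\lambda\sqrt{\varepsilon})}$ replaced by its pointwise limit, which by \eqref{Qep} and \eqref{note*} equals $\sqrt{f(0)}=\sqrt{\sum_{j=1}^{n}c_{1}(\kappa_{j})A_{j}K_{\frac{\kappa_{j}-1}{2}}(|w_{j}|)|w_{j}|^{\frac{\kappa_{j}-1}{2}}}$.

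Second, I would prove $\mathbb{E}(\overline{U}_{\varepsilon}^{g}(t,x)-U_{0}^{g}(t,x))^{2}\to0$. Since the difference is a single Wiener integral and $W(\cdot)$ is symmetric, the isometry and the substitution $u=\lambda\sqrt{\varepsilon}$ give
\[
\mathbb{E}\bigl(\overline{U}_{\varepsilon}^{g}(t,x)-U_{0}^{g}(t,x)\bigr)^{2}=\frac{1}{4\pi}\int_{\mathbb{R}}e^{-\lambda^{2}/2}\bigl(\sqrt{f(\lambda\sqrt{\varepsilon})}-\sqrt{f(0)}\bigr)^{2}d\lambda=\frac{1}{4\pi\sqrt{\varepsilon}}\int_{\mathbb{R}}e^{-u^{2}/(2\varepsilon)}\varphi(u)\,du,
\]
with $\varphi(u):=(\sqrt{f(u)}-\sqrt{f(0)})^{2}\ge0$. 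Because $A_{0}=0$, the density \eqref{spectral01} has no singularity at the origin, so $f$ is continuous near $0$ with $f(0)>0$, hence $\varphi$ is continuous at $0$ with $\varphi(0)=0$; moreover $\tfrac{1}{\sqrt{\varepsilon}}e^{-u^{2}/(2\varepsilon)}$ is an approximate identity with total mass $\sqrt{2\pi}$. Fixing $\delta\in(0,\min_{1\le j\le n}|w_{j}|)$, I would bound the $\{|u|\le\delta\}$ contribution by $\sqrt{2\pi}\,\sup_{|u|\le\delta}\varphi(u)$, and, using $\varphi(u)\le2f(u)+2f(0)$, bound the $\{|u|>\delta\}$ contribution by three pieces that are all $o(1)$ because $e^{-c/\varepsilon}$ beats $\varepsilon^{-1/2}$: a Gaussian-tail estimate for the $f(0)$-part; on $\delta<|u|\le L$, $L:=2\max_{j}|w_{j}|$, the local integrability of the power-law singularities of $f$ multiplied by $e^{-\delta^{2}/(2\varepsilon)}$; and on $|u|>L$, the boundedness and exponential decay of $f$ from Lemma~\ref{lem11} multiplied by a Gaussian tail. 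Consequently $\limsup_{\varepsilon\to0}\mathbb{E}(\overline{U}_{\varepsilon}^{g}(t,x)-U_{0}^{g}(t,x))^{2}\le\frac{\sqrt{2\pi}}{4\pi}\sup_{|u|\le\delta}\varphi(u)$ for every such $\delta$, and $\delta\to0$ forces the limit to be $0$.

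Third, to reach the finite-dimensional distributions, note that for all $a_{i}\in\mathbb{R}$, $t_{i}>0$, $x_{i}\in\mathbb{R}$ the combination $\sum_{i}a_{i}(\overline{U}_{\varepsilon}^{g}(t_{i},x_{i})-U_{0}^{g}(t_{i},x_{i}))$ is again a single Wiener integral with second moment $\frac{1}{4\pi}\int_{\mathbb{R}}\bigl|\sum_{i}a_{i}e^{\,i(\lambda x_{i}+\mu(-1)^{\frac{m-1}{2}}\lambda^{m}t_{i})}\bigr|^{2}e^{-\lambda^{2}/2}(\sqrt{f(\lambda\sqrt{\varepsilon})}-\sqrt{f(0)})^{2}d\lambda$; as the first factor is bounded by $(\sum_{i}|a_{i}|)^{2}$, the previous estimate gives convergence of this moment to $0$, hence convergence in $L_{2}(\Omega)$, hence in distribution. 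Since the fields are zero-mean Gaussian, the Cram\'{e}r--Wold device yields weak convergence of the finite-dimensional distributions. Finally, with $c^{2}:=\frac{1}{4\pi}\sum_{j=1}^{n}c_{1}(\kappa_{j})A_{j}K_{\frac{\kappa_{j}-1}{2}}(|w_{j}|)|w_{j}|^{\frac{\kappa_{j}-1}{2}}$, the isometry gives
\[
\Cov\bigl(U_{0}^{g}(t,x),U_{0}^{g}(t',x')\bigr)=c^{2}\int_{\mathbb{R}}e^{\,i\lambda(x-x')+i\mu(-1)^{\frac{m-1}{2}}\lambda^{m}(t-t')}e^{-\lambda^{2}/2}\,d\lambda,
\]
and taking real parts (the imaginary part integrates to $0$ by oddness of the integrand) gives the stated covariance. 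The one genuinely delicate step is the domination in the second paragraph; here it is comparatively mild, since the Gaussian weight $e^{-\lambda^{2}/2}$ from the kernel pushes the rescaled singularities $\pm w_{j}/\sqrt{\varepsilon}$ into a region carrying the super-exponentially small factor $e^{-w_{j}^{2}/(2\varepsilon)}$ --- it is precisely this feature, absent in the odd-order equation itself, that makes the smoothed limit nondegenerate. The remaining point to watch is the justification of the stochastic Fubini interchange.
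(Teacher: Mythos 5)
Your proposal is correct, and its skeleton (stochastic Fubini with the Gaussian kernel, the identity $\int_{\mathbb{R}}e^{-(x_{1}-x)^{2}}e^{i\lambda x_{1}/\sqrt{\varepsilon}}dx_{1}=\sqrt{\pi}e^{i\lambda x/\sqrt{\varepsilon}}e^{-\lambda^{2}/(4\varepsilon)}$, the rescaling $\lambda=\tilde\lambda\sqrt{\varepsilon}$, the identification of the limit via \eqref{Qep} and \eqref{note*}, and the $L_{2}$ isometry giving $\frac{1}{4\pi}\int e^{-\lambda^{2}/2}(\sqrt{f(\lambda\sqrt{\varepsilon})}-\sqrt{f(0)})^{2}d\lambda$) coincides with the paper's argument. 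Where you genuinely diverge is in closing this last step: the paper simply declares it ``analogous to the proof of Theorem~\ref{the31} for $m=2$'', i.e.\ it reuses the generalized dominated convergence machinery with the splitting at $|u|\le\varepsilon^{1/2-\delta}$, whereas you treat $\varepsilon^{-1/2}e^{-u^{2}/(2\varepsilon)}$ as an approximate identity, split at a fixed $\delta<\min_{j}|w_{j}|$, and exploit that with $A_{0}=0$ the density $f$ is continuous and finite at the origin while its off-origin singularities are integrable and pushed into the super-exponentially small Gaussian tail. Your route is more self-contained and arguably more elementary (no appeal to the generalized DCT), and it makes transparent exactly why the kernel's frequency cutoff rescues the odd-order case; the paper's route buys brevity by recycling the even-order proof verbatim. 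You also spell out the Cram\'er--Wold/linear-combination step that the paper leaves implicit, which is a welcome addition since joint Gaussianity on a common white noise is what turns pointwise $L_{2}$ convergence into convergence of finite-dimensional distributions. One small slip to fix: in your first displayed formula for $\overline{U}_{\varepsilon}^{g}(t,x)$ the prefactor $\varepsilon^{-1/4}$ from \eqref{sqr} is missing; it must be present there so that it cancels against the $\varepsilon^{1/4}$ produced by the scaling $W(d(\tilde\lambda\sqrt{\varepsilon}))\overset{d}{=}\varepsilon^{1/4}W(d\tilde\lambda)$, and indeed your subsequent rescaled expression is the correct one, so this is a transcription error rather than a gap.
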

\begin{proof}
    Notice, that
    \begin{align*}
   &\int_{\mathbb{R}} g(x_1,x) e^{i\lambda x_1 / \sqrt{\varepsilon}} \, dx_1
= \int_{\mathbb{R}} e^{-(x_1 - x)^2} e^{i\lambda (x_1-x) / \sqrt{\varepsilon}}e^{i\lambda x/\sqrt{\varepsilon}} \, dx_1 \\& = e^{i\lambda x / \sqrt{\varepsilon}} \int_{\mathbb{R}} e^{-u^2} e^{i\lambda u / \sqrt{\varepsilon}} \, du
= \sqrt{\pi} e^{i\lambda x / \sqrt{\varepsilon}} e^{-\lambda^2 / 4\varepsilon}.
\end{align*}
Thus, as $e^{-(x_{1}-x)^{2}}\sqrt{f(\lambda)}\in L_{2}(\mathbb{R}^2)$, by the stochastic Fubini theorem, the order of integration in \eqref{sqr} can be changed, which gives
\begin{align*}
    \overline{U}_{\varepsilon}^{g}(t,x) &=\frac{1}{2\pi\varepsilon^{1/4}}\int_{\mathbb{R}}g(x,x_1)\int_\mathbb{R}e^{i\lambda x_{1}/\sqrt{\varepsilon}+i\mu(-1)^{\frac{m-1}{2}}\lambda^{m}t/\sqrt{\varepsilon^m}}\sqrt{f(\lambda)}W(d\lambda)dx_1 \\&=\frac{\varepsilon^{-1/4}}{2\sqrt{\pi}} \int_{\mathbb{R}} e^{i\lambda x / \sqrt{\varepsilon}} e^{i\mu(-1)^{\frac{m-1}{2}} \lambda^m t / \sqrt{\varepsilon^m}} e^{-\lambda^2 / (4\varepsilon)} \sqrt{f(\lambda)} \, W(d\lambda).
\end{align*}

By using the change of variables $\lambda = \tilde{\lambda} \sqrt{\varepsilon}$, one obtains
\begin{align*}
    \overline{U}_{\varepsilon}^{g}(t,x)& = \frac{1}{2\sqrt{\pi}} \int_{\mathbb{R}} e^{i \tilde{\lambda} x }
e^{i \mu(-1)^{\frac{m-1}{2}} \tilde{\lambda}^m t} e^{- \tilde{\lambda}^2 / 4} \\&\times
\sqrt{\sum_{j=1}^{n} \frac{c_2(\kappa_j)}{2}A_{j} \left(
\frac{1 - \theta_{\kappa_j}(|\tilde{\lambda} \sqrt{\varepsilon} + w_j|)}{|\tilde{\lambda} \sqrt{\varepsilon} + w_j|^{1 - \kappa_j}}
+ \frac{1 - \theta_{\kappa_j}(|\tilde{\lambda} \sqrt{\varepsilon} - w_j|)}{|\tilde{\lambda} \sqrt{\varepsilon} - w_j|^{1 - \kappa_j}} \right) }
\, W(d\tilde{\lambda}).
\end{align*}
Then,
\[
U_{0}^{g}(t,x) = \frac{1}{2\sqrt{\pi}}  \sqrt{ \sum_{j=1}^{n}\frac{c_2(\kappa_j)}{w_{j}^{1 - \kappa_j}}A_j
(1 - \theta_{\kappa_j}(|w_j|)) }\int_{\mathbb{R}}e^{i {\lambda} x} e^{i \mu (-1)^{\frac{m-1}{2}}{\lambda}^m t} e^{-{\lambda}^2 / 4} W(d{\lambda}),
\]
and analogously to \eqref{131} one obtains that
\begin{align*}
    R(t,x) &= \mathbb{E} \left( \overline{U}_{\varepsilon}^{g}(t,x) - U_{0}^{g}(t,x) \right)^2
\\&= \frac{1}{4\pi}\int_{\mathbb{R}} e^{-\lambda^2/2}
\left( \sqrt{ Q_\varepsilon({\lambda}) } - \sqrt{\sum_{j=1}^{n}\frac{c_2(\kappa_{j})}{|w_{j}|^{1-\kappa_{i}}}(1-{\theta_{\kappa_j}}(|w_{j}|))A_{j}}\right)^2 d{\lambda}.
\end{align*}

Therefore, the remaining part of the proof is analogous to the proof of Theorem~\ref{the31} for the case of~$m=2$.
\end{proof}
\begin{example}\label{ex4} The values of $A_j,$ $\kappa_j,$ $w_j$ are the same as in Example~{\rm \ref{ex2}}. The parameter $\mu$ is chosen to be $1$. For the third-order heat equation, Figure~{\rm \ref{fig:a3}} presents a realization of the random field $U_{0}^{g}(t,x)$ obtained using an approximation of the stochastic integral with $\Delta = 0.001$ and $N = 4000$. Compared to Example~{\rm \ref{ex2}}, the field exhibits pronounced random oscillations over the entire region. As shown in Figure~{\rm \ref{fig:b3}}, the covariance function decays smoothly but oscillates with increasing spatial separation, in contrast to Example~{\rm \ref{ex2}}.

Figure~{\rm \ref{fig:c3}} shows that the temporal covariance for fixed $x-x' = 1.5$ is initially greater for smaller~$m$, but then decreases sharply, eventually becoming lower for small~$m$ as the time difference increases further. Compared to Example~{\rm \ref{ex2}}, Figure~{\rm \ref{fig:d3}} shows asymmetric spatial covariance for fixed $t-t' = 5$, which is monotonic on one side and oscillatory on the other, with a slight shift depending on $m$.
\begin{figure}[htbp]
\centering
\begin{subfigure}{0.5\textwidth}
    \centering
    \includegraphics[width=1.1\linewidth,trim=5.8cm 1.6cm 2.0cm 4.3cm,clip]{"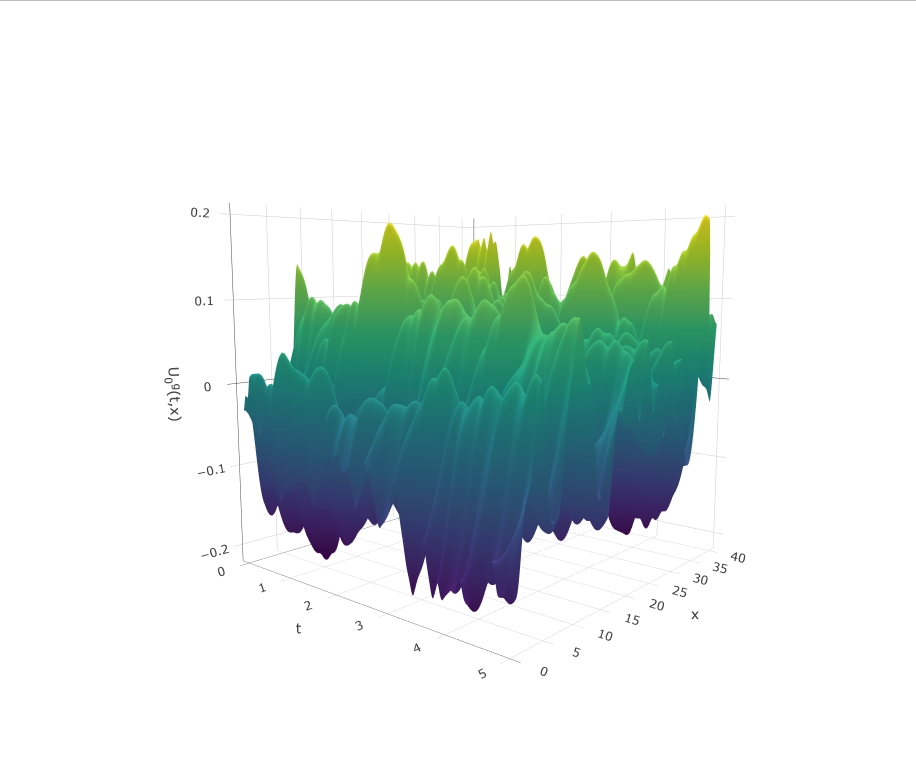"}
    \caption{Realization of $U_{0}^{g}(t,x)$}
    \label{fig:a3}
\end{subfigure}\hfill
\begin{subfigure}{0.5\textwidth}
    \centering
    \includegraphics[width=1.1\linewidth,trim=8.0cm 0.5cm 5cm 3.0cm,clip]{"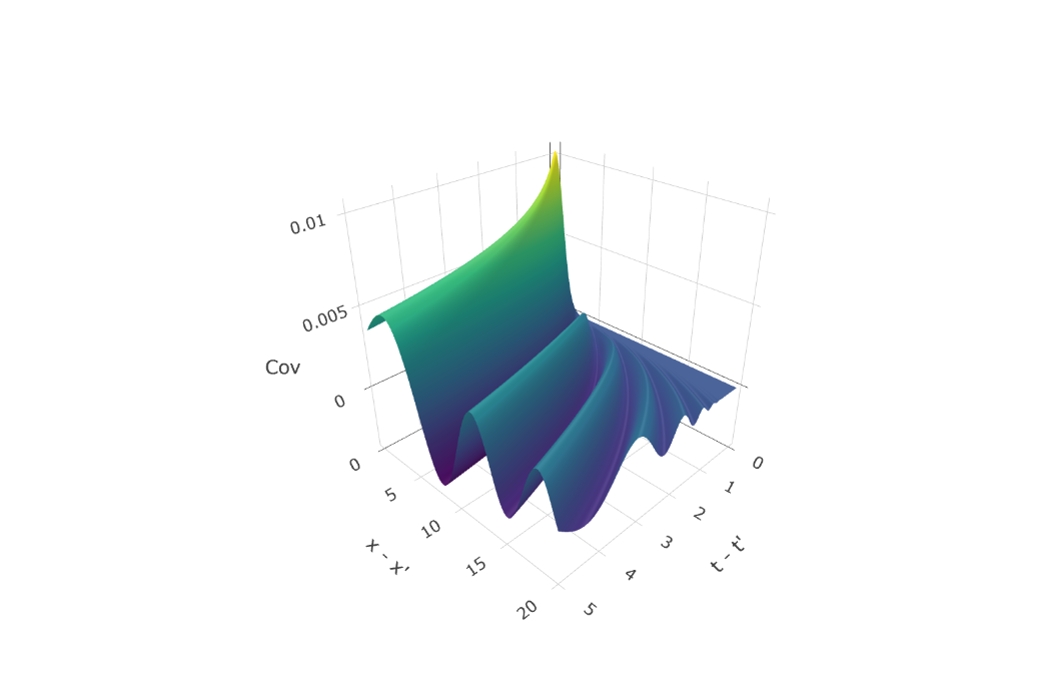"}
    \caption{Covariance function of $U_{0}^{g}(t,x)$}
    \label{fig:b3}
\end{subfigure}
\caption{The case of the 3rd-order heat equation with $A_{0}=0$}
\label{fig6}
\end{figure}

\begin{figure}[htbp]
\centering
\begin{subfigure}{0.485\textwidth}
    \centering
    \includegraphics[width=\linewidth]{"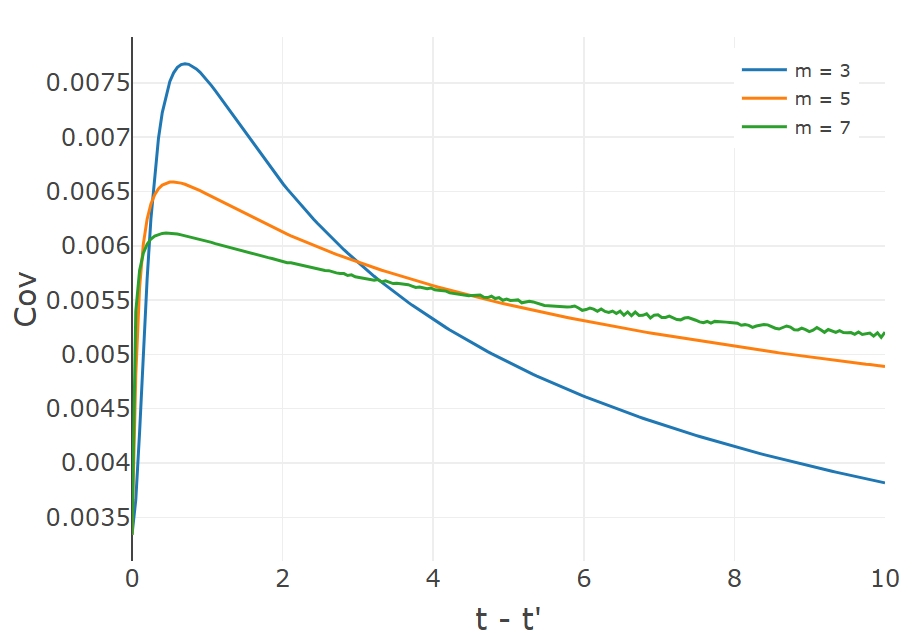"}
    \caption{Temporal covariance function,\,$x-x'=1.5$}
    \label{fig:c3}
\end{subfigure}\hfill
\begin{subfigure}{0.48\textwidth}
    \centering
    \includegraphics[width=\linewidth]{"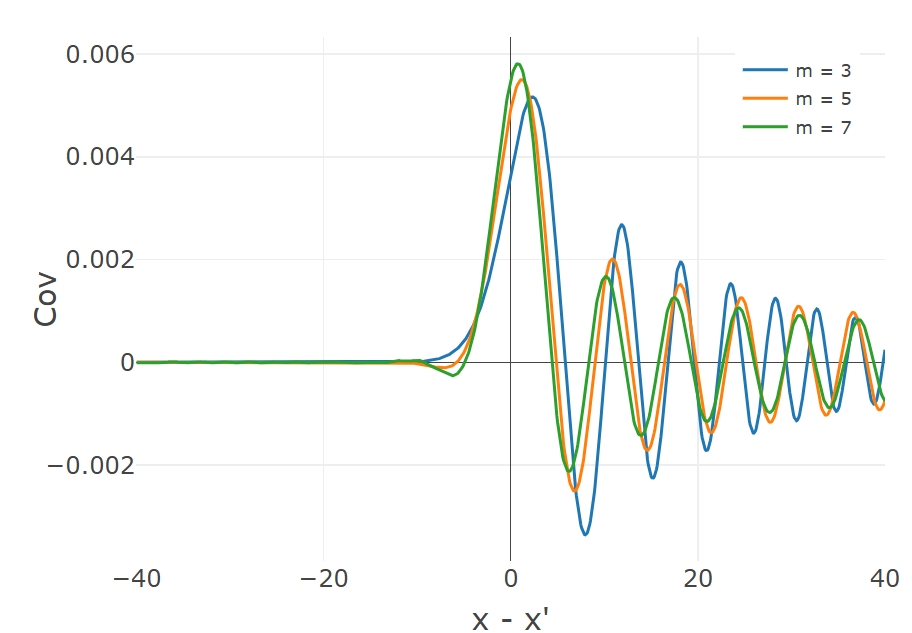"}
    \caption{Spatial covariance function, $t-t'=5$}
    \label{fig:d3}
\end{subfigure}
\caption{Covariance structures of $U_{0}^{g}(t,x)$ for $m$th-order heat equations with $A_{0}=0$}
\label{fig7}
\end{figure}
\end{example}
\begin{theorem}\label{thm 5 odd}
    Let a random field $u(t,x), \, t>0, \, x\in \mathbb{R}$,  be the solution to the initial value problem \eqref{eq15} and \eqref{initial Airy}. Let $\eta(x),\, x\in\mathbb{R}$, have covariance function that satisfies Assumption~{\rm \ref{AssB}} with $A_{0}\neq 0$. If $\varepsilon\to 0$, then the finite-dimensional distributions of the random field $\overline{U}_{\varepsilon}^{g}(t,x)$ weakly converge to the finite-dimensional distributions of the zero-mean Gaussian random field
    \[U_{0}^{g}(t,x)=\frac{\sqrt{A_{0}\,c_2(\kappa_{0})}}{2\sqrt{\pi}}\int_{\mathbb{R}}\frac{e^{i(\lambda x+\mu (-1)^{\frac{m-1}{2}}\lambda^{m}t)}}{|\lambda|^{\frac{1-\kappa_{0}}{2}}}e^{-\lambda^{2}/4}W(d\lambda)\]
    with the covariance function
    \[{\rm Cov}(U_{0}^{g}(t,x),U_{0}^{g}(t',x'))=\frac{A_{0}\,c_{2}(\kappa_{0})}{4\pi}\int_{\mathbb{R}}\frac{\cos(\lambda(x-x')+\mu(-1)^{\frac{m-1}{2}}(t-t')\lambda^{m})}{|\lambda|^{1-\kappa_{0}}}e^{-\lambda^{2}/2}d\lambda.\]
\end{theorem}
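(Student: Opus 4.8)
\noindent\emph{Proof proposal.}
The plan is to follow the route of the proof of Theorem~\ref{thm5.2}, but to retain the spectral singularity at the origin and treat it exactly as in the proof of Theorem~\ref{the32}. As in Theorem~\ref{thm5.2}, I would start from the Gaussian-kernel identity
\[
\int_{\mathbb{R}} e^{-(x_1-x)^2} e^{i\lambda x_1/\sqrt{\varepsilon}}\,dx_1
= \sqrt{\pi}\,e^{i\lambda x/\sqrt{\varepsilon}}\,e^{-\lambda^2/(4\varepsilon)} ,
\]
and, since $e^{-(x_1-x)^2}\sqrt{f(\lambda)}\in L_2(\mathbb{R}^2)$, apply the stochastic Fubini theorem to the definition of the kernel--smoothed solution (with the normalization $\varepsilon^{-\kappa_0/4}$ in place of $\varepsilon^{-1/4}$ in \eqref{sqr}, in analogy with Theorem~\ref{the32}) to obtain the spectral representation
\[
\overline{U}_{\varepsilon}^{g}(t,x)
= \frac{\varepsilon^{-\kappa_0/4}}{2\sqrt{\pi}}\int_{\mathbb{R}}
e^{i\lambda x/\sqrt{\varepsilon}}\,
e^{i\mu(-1)^{\frac{m-1}{2}}\lambda^m t/\sqrt{\varepsilon^m}}\,
e^{-\lambda^2/(4\varepsilon)}\sqrt{f(\lambda)}\,W(d\lambda).
\]

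Next I would perform the change of variables $\lambda=\tilde\lambda\sqrt{\varepsilon}$ and use $W(d(\tilde\lambda\sqrt{\varepsilon}))\overset{d}{=}\varepsilon^{1/4}W(d\tilde\lambda)$; the phase becomes $e^{i\mu(-1)^{\frac{m-1}{2}}\tilde\lambda^m t}$ because $\lambda^m=\tilde\lambda^m\sqrt{\varepsilon^m}$, and the kernel factor becomes $e^{-\tilde\lambda^2/4}$. Then I would isolate the $j=0$ summand of $f$ in \eqref{spectral01}: using $w_0=0$ and the identity \eqref{note*}, one has $f(\tilde\lambda\sqrt{\varepsilon}) = A_0 c_2(\kappa_0)\,|\tilde\lambda\sqrt{\varepsilon}|^{\kappa_0-1}\,Q_\varepsilon(\tilde\lambda)$, with $Q_\varepsilon(\tilde\lambda)$ exactly of the form appearing in the proof of Theorem~\ref{the32} and satisfying $\lim_{\varepsilon\to0}Q_\varepsilon(\tilde\lambda)=1$. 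The powers of $\varepsilon$ cancel, $-\kappa_0/4+1/4-(1-\kappa_0)/4=0$, which yields
\[
\overline{U}_{\varepsilon}^{g}(t,x)
\overset{d}{=}
\frac{\sqrt{A_0 c_2(\kappa_0)}}{2\sqrt{\pi}}\int_{\mathbb{R}}
\frac{e^{i\tilde\lambda x}\,e^{i\mu(-1)^{\frac{m-1}{2}}\tilde\lambda^m t}\,e^{-\tilde\lambda^2/4}}{|\tilde\lambda|^{(1-\kappa_0)/2}}
\sqrt{Q_\varepsilon(\tilde\lambda)}\,W(d\tilde\lambda),
\]
whose natural limit is the stated field $U_0^g(t,x)$.

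To upgrade this pointwise convergence to convergence of finite-dimensional distributions, I would, as in Theorems~\ref{the31}--\ref{the32}, estimate
\[
R(t,x) = \mathbb{E}\big(\overline{U}_{\varepsilon}^{g}(t,x)-U_0^g(t,x)\big)^2
= \frac{A_0 c_2(\kappa_0)}{4\pi}\int_{\mathbb{R}}\frac{e^{-\lambda^2/2}}{|\lambda|^{1-\kappa_0}}\big(\sqrt{Q_\varepsilon(\lambda)}-1\big)^2 d\lambda ,
\]
which tends to $0$ pointwise in $\lambda$; then I would dominate the integrand by $\tfrac{e^{-\lambda^2/2}}{|\lambda|^{1-\kappa_0}}(Q_\varepsilon(\lambda)+1)$ and invoke the generalized dominated convergence theorem \cite[p.59]{Folland1999}, verifying the convergence of the total mass $\int_{\mathbb{R}}\tfrac{e^{-\lambda^2/2}}{|\lambda|^{1-\kappa_0}}(Q_\varepsilon(\lambda)+1)\,d\lambda \to 2\int_{\mathbb{R}}\tfrac{e^{-\lambda^2/2}}{|\lambda|^{1-\kappa_0}}\,d\lambda$ by splitting $Q_\varepsilon$ into its $j=0$ part (controlled via $\theta_{\kappa_0}(|\lambda\sqrt{\varepsilon}|)\to 0$ and the integrability of $|\lambda|^{\kappa_0-1}$ near the origin, which holds since $\kappa_0\in(0,1)$) and its $j\ge1$ parts (which carry a vanishing prefactor $\varepsilon^{(1-\kappa_0)/2}$). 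This step is where the kernel smoothing is essential and is the main obstacle: for odd $m$ the weight $e^{-2\lambda^m t}$ used in the even-order argument is \emph{not} integrable, since $\lambda^m$ is odd, so one cannot copy the estimates of Theorem~\ref{the32} verbatim; instead the Gaussian factor $e^{-\lambda^2/2}$ supplied by the kernel plays that role, and one must check that it simultaneously controls the origin singularity $|\lambda|^{\kappa_0-1}$ and the singularities at the receding points $\lambda=\pm w_j/\sqrt{\varepsilon}$ (the latter being immediate because $e^{-\lambda^2/2}$ decays super-exponentially). Once $R(t,x)\to 0$ is established, Gaussianity and linearity of the stochastic integrals give convergence of all finite-dimensional distributions, and the covariance formula follows from the It\^o isometry applied to the spectral representation of $U_0^g$, the resulting integral being finite for the same reasons (singularity $|\lambda|^{\kappa_0-1}$ at the origin with $\kappa_0>0$, Gaussian decay at infinity).
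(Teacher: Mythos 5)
Your proposal is correct and follows exactly the route the paper intends: its own proof of this theorem is a one-line reference to carrying out the steps of Theorems~\ref{the32} and \ref{thm5.2}, which is precisely what you do (Gaussian-kernel Fourier identity and stochastic Fubini, rescaling $\lambda=\tilde\lambda\sqrt{\varepsilon}$, factoring out the $j=0$ singular term via \eqref{note*}, and generalized dominated convergence with the kernel factor $e^{-\lambda^{2}/2}$ replacing the non-integrable odd-order weight). You also correctly read the normalization as $\varepsilon^{-\kappa_0/4}$ rather than the $\varepsilon^{-1/4}$ literally written in \eqref{sqr}, which is the adjustment implicitly required (and intended, by analogy with Theorem~\ref{the32}) for the stated nondegenerate limit when $A_0\neq 0$.
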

\begin{proof}
    The result follows from steps analogous to those in the proofs of Theorems~\ref{the32} and \ref{thm5.2}.
\end{proof}
\begin{Remark}
    The random fields $U_{0}^{g}(t,x)$ in Theorem {\rm\ref{thm5.2}} and {\rm\ref{thm 5 odd}} are stationary both in space and time.
\end{Remark}
\begin{Remark}
Theorems {\rm\ref{thm5.2}} and {\rm\ref{thm 5 odd}} extend and correct results in {\rm\cite{beghin2000}}.
\end{Remark}

\begin{example}
This example uses the same parameter values as Example~{\rm\ref{ex3}} and the equation parameter $\mu=1$.
The simulated realization of $U_{0}^{g}(t,x)$ in Figure~{\rm \ref{fig:a4}} exhibits an oscillating pattern as in Example~{\rm \ref{ex4}}, but with values increasing in the spatial direction in this case.
The covariance function in Figure~{\rm\ref{fig:b4}} exhibits similar waves to those in Example~{\rm\ref{ex4}}, but takes on larger values.

The spatial covariance in Figure~{\rm\ref{fig:d4}} exhibits oscillations around a monotonically decreasing average for increasing positive differences $x-x'$, and monotonically decreasing covariances for the increasing magnitude of negative differences. The function does not vanish as rapidly as the one in Example~{\rm\ref{ex4}}.
\begin{figure}[htbp]
\centering
\begin{subfigure}{0.5\textwidth}
    \centering
    \includegraphics[width=1.1\linewidth,trim=5cm 2cm 2cm 6cm,clip]{"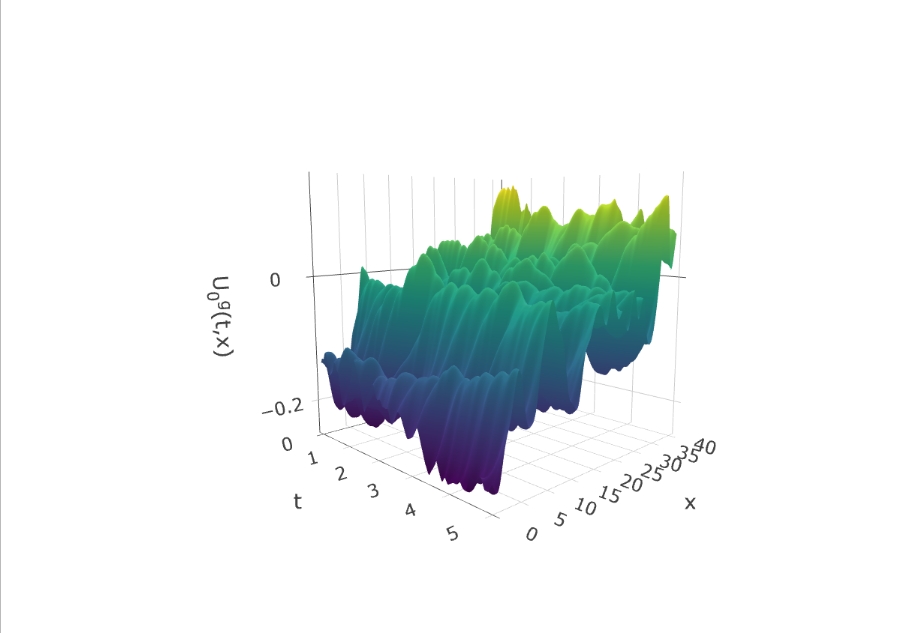"}
    \caption{Realization of $U_{0}^{g}(t,x)$}
    \label{fig:a4}
\end{subfigure}\hfill
\begin{subfigure}{0.5\textwidth}
    \centering
\includegraphics[width=1.1\linewidth,trim=3.5cm 2cm 1cm 6cm,clip]{"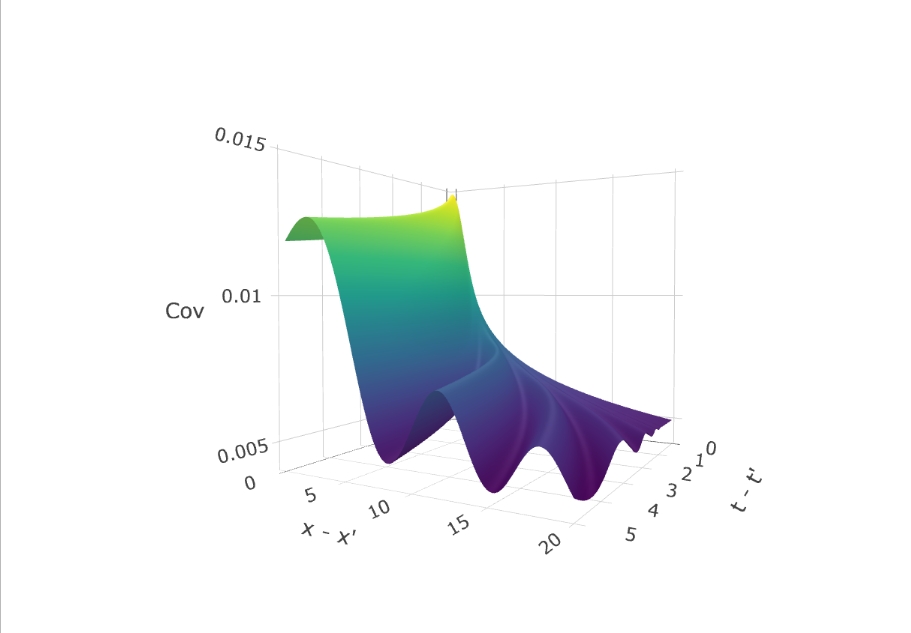"}
    \caption{Covariance function of $U_{0}^{g}(t,x)$}
    \label{fig:b4}
\end{subfigure}
\caption{The case of the 3rd-order heat equation with $A_{0}\neq 0$}
\end{figure}

\begin{figure}[htbp]
\centering
\begin{subfigure}{0.485\textwidth}
    \centering
    \includegraphics[width=\linewidth]{"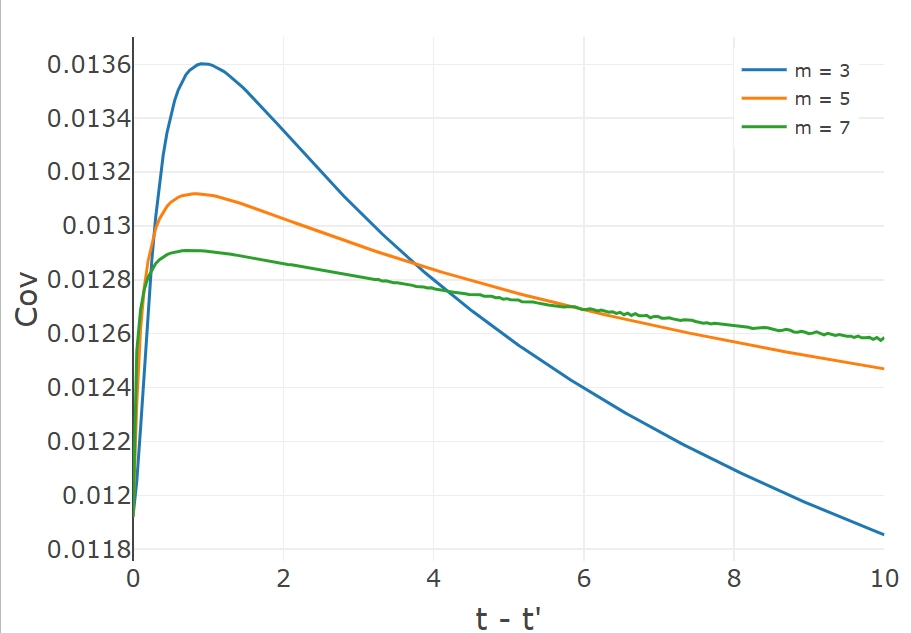"}
    \caption{Temporal covariance function,\,$x-x'=1.5$}
    \label{fig:c4}
\end{subfigure}\hfill
\begin{subfigure}{0.48\textwidth}
    \centering
    \includegraphics[width=\linewidth]{"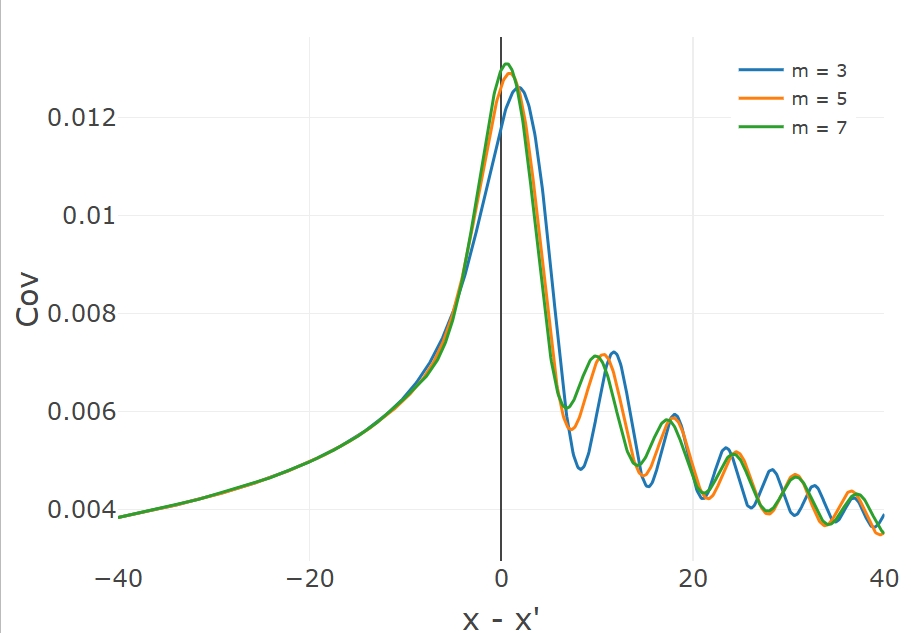"}
    \caption{Spatial covariance function, $t-t'=5$}
    \label{fig:d4}
\end{subfigure}
\caption{Covariance structures of $U_{0}^{g}(t,x)$ for $m$th-order heat equations with $A_{0}\neq 0$}
\end{figure}

\end{example}
\section{Conclusion}\label{sec conclusion}
 \noindent The paper proves multiscaling limit theorems for solutions to high-order PDEs with random initial conditions exhibiting long-memory and cyclic behavior. It analyses Airy-type and heat equations of various orders for cases of initial conditions possessing spectral singularities at zero and non-zero frequencies. Using spectral and scaling methods, it is proved that their rescaled solutions converge to Gaussian random fields. The limit fields depend on the orders of the equations and the presence or absence of the spectral singularity at zero.
Some of the open problems motivated by the obtained results are
\renewcommand{\labelitemi}{-}
\begin{itemize}
    \item Derive limit theorems analogous to Theorem \ref{thm5.2} for general weighted functionals of the solutions, see \cite{Statisticalanalysis, Ivanov2013, Olenko2013};
    \item Generalize the results for $ \eta(x)$ of an arbitrary Hermite rank, see \cite{beghin2000};
    \item Extend the results to the class of random initial conditions and limit theorems with slowly varying functions, see \cite{ANH2000239, Anh2002RenormalizationAH, LeonenkoOlenko2013};\item Obtain similar results for general fractional Riesz-Bessel equations, see \cite{ANH2000239, Anh2002RenormalizationAH, alghamdi2024}; \item Generlize the approach to the multidimensional case of random fields, see \cite{ANH2000239, Anh2002RenormalizationAH, Olenko2013}.
\end{itemize} \section*{Acknowledgments}
This research was supported by the Australian Research Council's Discovery Projects funding scheme (project number DP220101680).  A.~Olenko was partially supported by La Trobe University's SCEMS CaRE and Beyond grant. N.~Leonenko would like to thank for support and hospitality during the programmes “Fractional Differential Equations” (FDE2), “Uncertainly Quantification and Modelling of Materials” (USM), both supported by EPSRC grant EP/R014604/1, and the programme “Stochastic systems for anomalous diffusion” (SSD), supported by EPSRC grant EP/Z000580/1, at Isaac Newton Institute for Mathematical Sciences, Cambridge. He was also partially supported under Croatian Scientific Foundation grant “Scaling in Stochastic Models” HRZZ-IP-2022-10-8081, grant FAPESP 22/09201-8 (Brazil) and the Taith Research Mobility grant (Wales, Cardiff University). A.~Olenko is grateful to Prof.~E.Orsingher (Sapienza Universit\`a di Roma) for discussions on the general theory of Airy processes.

\section*{Declarations}

{\bf Conflicts of interest:} The authors have no conflicts of interest.

\noindent{\bf  Data availability:} No datasets were generated or analysed during the current study.

\noindent{\bf Code availability:} All numerical computations, simulations and plotting in this paper were performed using the software R (version 4.5.1) and Maple~2023. The corresponding R
and Maple code is freely available in the folder
”Research materials” from the website \url{https://sites.google.com/site/olenkoandriy/}

\bibliography{Bibliography.bib}

\end{document}